\newcommand{\losemi}{{\otimes \kern -.78em \ltimes}}
\newcommand{\rosemi}{{\otimes \kern -.78em \rtimes}}
\newcommand{\leqnomode}{\tagsleft@true}
\newcommand{\reqnomode}{\tagsleft@false}
\newtheorem{theorem}{Theorem}[subsection]
\let\c@fact\c@theorem\makeatother
\let\c@note\c@theorem\makeatother
\newtheorem{lemma}{Lemma}[subsection]
\let\c@lemma\c@theorem\makeatother
\let\c@lemma\c@theorem\makeatother
\newtheorem{quest}{Question}[subsection]
\let\c@alg\c@theorem\makeatother
\newtheorem{prop}{Proposition}[subsection]
\let\c@prop\c@theorem\makeatother
\let\c@conj\c@theorem\makeatother
\let\c@cor\c@theorem\makeatother
\newtheorem{defn}{Definition}[subsection]
\let\c@defn\c@theorem\makeatother
\theoremstyle{definition}
\newtheorem{remark}{Remark}[subsection]
\let\c@remark\c@theorem\makeatother
\let\c@example\c@theorem\makeatother
\numberwithin{equation}{subsection}
\crefname{theorem}{Theorem}{Theorems}
\crefname{fact}{Fact}{Facts}
\crefname{note}{Note}{Notes}
\crefname{lemma}{Lemma}{Lemmas}
\crefname{alg}{Algorithm}{Algorithms}
\crefname{remark}{Remark}{Remarks}
\crefname{example}{Example}{Examples}
\crefname{prop}{Proposition}{Propositions}
\crefname{conj}{Conjecture}{Conjectures}
\crefname{cor}{Corollary}{Corollaries}
\crefname{defn}{Definition}{Definitions}
\crefname{equation}{\!\!}{\!\!} 
\newcounter{listequation}
\begin{document}
\title{Varieties of $G_r$-summands in Rational $G$-modules}

\author{Paul Sobaje}
\address{Department of Mathematics \\
          University of Georgia \\
          Athens, GA 30602}
\email{sobaje@uga.edu}
\date{\today}
\subjclass[2010]{17B10 (primary), and 20G10 (secondary)} 

\begin{abstract}
Let $G$ be a simple simply connected algebraic group over an algebraically closed field $k$ of characteristic $p$, with $r$-th Frobenius kernel $G_r$.  Let $M$ be a $G_r$-module and $V$ a rational $G$-module.  We put a variety structure on the set of all $G_r$-summands of $V$ that are isomorphic to $M$, and study basic properties of these varieties.  We give a few applications of this work to the representation theory of $G$, primarily in providing some sufficient conditions for when a $G_r$-module decomposition of $V$ can be extended to a $G$-module decomposition.  In particular we are interested in connections to Donkin's tilting module conjecture, and more generally to the problem of finding a $G$-structure for the projective indecomposable $G_r$-modules.  To that end, we show that Donkin's conjecture is equivalent to determining the linearizability or non-linearizability of $G$-actions on certain affine spaces.
\end{abstract}

\maketitle

\section{Introduction}

Let $k$ be an algebraically closed field of characteristic $p>0$, $G$ a simple simply connected algebraic group over $k$, and let $G_r$ denote the $r$-th Frobenius kernel of $G$.  Every rational $G$-module restricts to a module for $G_r$.  One may therefore ask the following: given a finite dimensional $G_r$-module $M$, is it the restriction of a $G$-module?  For example, the simple $G_r$-modules all arise in this way, as first shown by Curtis in the case when $r=1$ \cite[II.3]{J}.  On the other hand, the so-called baby Verma modules for $G_r$ cannot in general be given a $G$-structure, since they usually have fundamentally different restrictions to the kernels of opposite Borel subgroups of $G$ (the exception being when the baby Verma is the $r$-th Steinberg module).  Since all Borel subgroups are conjugate in $G$, this presents an obstruction to lifting such modules to $G$.

The projective indecomposable $G_r$-modules have long been expected to have a $G$-structure \cite{HV}.  When $p \ge 2h-2$, where $h$ is the Coxeter number of $G$, they have not only been shown to have such a structure, but a unique one even, as all lifts are indecomposable tilting modules for $G$.  Donkin has conjectured that in all characteristics the projective indecomposable modules should lift to tilting modules \cite{D2}.  With the exception of $SL_3$, it is not known what happens when $p < 2h-2$.

The expectation that these modules will lift to $G$, as tilting modules or otherwise, is rooted in the fact that in all characteristics they can be realized as $G_r$-summands of $G$-modules in some nice way.  Fix a maximal torus $T$ in $G$, with $B$ the negative Borel subgroup containing $T$.  Following the notation in \cite{J}, let $Q_r(\lambda)$ be the $G_r$-projective cover of $L(\lambda)$ ($\lambda$ being $p^r$-restricted), $St_r$ the $r$-th Steinberg module, and let $\lambda^0 = (p^r - 1)\rho + w_0\lambda$.  It is known in all characteristics that $Q_r(\lambda)$ occurs as a $G_r$-summand of $St_r \otimes L(\lambda^0)$ with multiplicity one, and that its $G_r$-socle is a $G$-submodule of $St_r \otimes L(\lambda^0)$.  If we fix some such summand $Q_r(\lambda)$, the previous statements imply that for every $g \in G$, $g.Q_r(\lambda)$ is also a $G_r$-summand (in a different decomposition if $g$ moves the subspace) that projects isomorphically onto the original summand.  Thanks to a result of Pillen, Donkin's tilting module conjecture is true exactly when we can find some such $G_r$-decomposition of $St_r \otimes L(\lambda^0)$ that is $G$-stable (that is, is a $G$-decomposition).

At the other end of the spectrum, Parshall and Scott proved more recently that there is always a finite dimensional $G$-module $V$ such that $V \mid_{G_r} \cong Q_r(\lambda)^{\oplus n}$, for some $n > 0$ (see \cite{PS} for a more general result).  Based on the method that they use to show this, one does not expect that any of these $G_r$-summands are $G$-summands, however it is possible that one could be a $G$-submodule.  Even if one of these summands is not a $G$-submodule, we proved in \cite{So} that $Q_r(\lambda)$ will lift to $G$ so long as it can be lifted to $G_rB$ (this was shown only for $r=1$, but the proof will work in general).  Because of this, one need only show that there is a $B$-stable $G_r$-summand occurring in $V$ in order to get a $G$-structure on $Q_r(\lambda)$.

In each of the situations just described, $G$ permutes $G_r$-summands of some $G$-module, and the question is whether or not there is a summand that is stable as a subspace under the action of a subgroup of $G$.  A broader question one can ask is: which $G_r$-modules can even appear as $G_r$-summands in a $G$-module?  One can work out fairly easily that the baby Verma modules, for example, can not make such an appearance (see Proposition \ref{summandmeansstable} for more).

Motivated by these questions, our goal in this article is to put a $G$-variety structure on a set of $G_r$-summands, in order to lay the groundwork for the tools of geometric invariant theory to be brought to bear on the problem of finding fixed points (to be explored more fully in a later paper).  To be more specific, let $M$ be a $G_r$-summand of a $G$-module $V$.  We establish in Theorem \ref{structure} a $G$-variety structure for the set of all $G_r$-summands of $V$ that are isomorphic to $M$.  This structure is given as a homogeneous space for an algebraic group containing $G$.  We study these varieties in certain special cases that are based on the examples cited earlier.  The most complete statement about the variety structure comes when $V$ decomposes over $G_r$ into a sum of modules $M$ and $N$ such that every $G_r$-summand isomorphic to $M$ has trivial intersection with $N$.  For instance, Donkin's conjecture involves analyzing a setup such as this.  We show that the $G$-variety of all $G_r$-decompositions of $V$ into summands isomorphic to $M$ and $N$ is an affine space.  A $G$-decomposition of $V$ then occurs if and only if this affine space has a $G$-fixed point.

If $G$ acts algebraically on $\mathbb{A}^n$, then a stronger condition than having a fixed point is that the action be linearizable, meaning it is $G$-equivariantly isomorphic to a rational $G$-module.  We show in Theorem \ref{fixedislinear} that in the case described above ($V \cong M \oplus N$), $G$ acts with a fixed point on the decomposition variety if and only if the action is linearizable.  In particular, this means that the truth of Donkin's conjecture can be determined by the linearizability or non-linearizability of certain affine $G$-spaces (at least in theory).

The linearization problem for a connected reductive group acting algebraically on an affine space has been studied extensively in characteristic $0$.  In that setting, it is known that such actions need not be linearizable, however, whether there must always be a fixed point remains an open question.  On the other hand, in characteristic $p$, Kraft and Popov observe that if $G$ is not a torus, then there are many $G$-actions on affine spaces without fixed points.  Examples of such actions can be constructed from any finite dimensional $G$-module that is not semisimple (see ``Note added in proof" of \cite{KP}).  If $G$ is a torus, then any algebraic action on an affine space has a fixed point \cite{BB}.

The organization of this paper is as follows.  Relevant background information and a few technical lemmas are contained in Section 2.  In Section 3 we establish the variety structure on the set of $G_r$-summands of a $G$-module $V$.  Section 4 then gives a few applications of this work to the representation theory of $G$.  Some basic observations about algebraic actions of $G$ on $\mathbb{A}^n$ are made in Section 5, and we conclude with a brief mention about some directions to take this work in the future.

\subsection{Acknowledgments} We wish to thank Chris Drupieski and Eric Friedlander for helpful comments on an earlier version of this paper.  We thank Hanspeter Kraft for pointing us to work on reductive groups acting on affine spaces in positive characteristic.  This work was partially supported by the Research Training Grant, DMS-1344994, from the NSF.

\section{Preliminaries}

\subsection{Notation}

Throughout this paper $G$ will denote a simple simply connected group over $k$.  We fix a maximal torus $T$ contained in a Borel subgroup $B$ having unipotent radical $U$.  All notation regarding characters of $T$ and special types of highest weight modules follows that given in \cite{J}.

For any linear algebraic group $H$, we denote its Lie algebra by $\mathfrak{h}$, while its $r$-th Frobenius kernel is $H_r$.  The dual of its coordinate algebra is $\text{Dist}(H_r)$, the distribution algebra of $H_r$.  It is a finite dimensional algebra, and representations of $H_r$ correspond to modules for $\text{Dist}(H_r)$, hence we shall talk about the two interchangeably.
 
Let $V$ be a finite dimensional rational $H$-module given by a homomorphism of algebraic groups $\phi:H \rightarrow GL(V)$.  This induces an algebra map that we denote as $d\phi: \text{Dist}(H) \rightarrow \text{End}_k(V)$.  In particular, this gives a homomorphism from $\text{Dist}(H_r) \subseteq \text{Dist}(H)$ to $\text{End}_k(V)$.  The image of $H$ in $GL(V)$ acts by conjugation on $\text{End}_k(V)$, and this action stabilizes the image of $\text{Dist}(H_r)$ under $d\phi$.  We have that for all $x \in \text{Dist}(H_r)$ and $g \in H$, $d\phi(g.x) = \phi(g)d\phi(x)\phi(g)^{-1}$, where the action $g.x$ refers to the adjoint action of $g$ on $\text{Dist}(H_r)$ (Jantzen reference).

Let $M$ be an $H_r$-module. For each $h \in H$, $^hM$ denotes the module obtained by twisting $M$ by the $H_r$-automorphism $\text{Ad}(h)$ \cite[I.2.15]{J}.  We will say that $M$ is $H$-twist stable if $^hM \cong M$ for all $h \in H$ (in other contexts this might be referred to as being $H$-stable, however we must clarify our use of stability as we consider $H_r$-submodules of an $H$-module $V$, and whether or not they are stable as subspaces under the action of $H$ on $V$).

\subsection{Algebraic groups acting on varieties}

Let $H$ be a linear algebraic group over $k$.  A variety $X$ over $k$ is an $H$-variety (or $H$ acts morphically or algebraically on $X$) if there is an $H$-action on $X$ so that the map corresponding to the action,
$$\sigma: H \times X \rightarrow X,$$
is a morphism of varieties.  Given $x \in X$, we write $H.x$ for the $H$-orbit of $x$ in $X$, and $H_x$ or $C_H(x)$ for the stabilizer subgroup in $H$ of $x$.  If $Y \subseteq X$, then
$$C_H(Y) := \bigcap_{x \in Y} C_H(x), \quad N_H(Y) := \{ h \in H \mid h.Y \subseteq Y \}.$$

We recall that $C_H(x)$ and $C_H(Y)$ are closed subgroups of $H$ for any choice of $x$ and $Y$, while the subgroup $N_H(Y)$ is closed if $Y$ is closed in $X$.  Given any $h \in H$, the set of $h$-fixed points of $X$, denoted $X^h$, is closed in $X$.

The orbit $H.x$ is open in $\overline{H.x}$.  If $H$ is connected, then $H.x$ and $\overline{H.x}$ are irreducible, and $\overline{H.x}$ is the union of $H.x$ and orbits of strictly smaller dimension.  If $H$ is reductive and $X$ is affine, then there is a unique closed $H$-orbit in every orbit closure.

There is a surjective variety morphism $H \rightarrow H.x$ given by $h \mapsto h.x$.  This morphism induces a bijective morphism of $H$-varieties $$H/H_x \rightarrow H.x$$
that is an isomorphism if it is separable.  Separability occurs when when the tangent map
$$T_{1H_x}(H/H_x) \rightarrow T_x(H.x)$$
is surjective \cite[Theorem 4.3.7(ii)]{Sp}.  By \cite[Chapter 6, Theorem 3.1]{FR}, this is equivalent to checking that the tangent map
$$\mathfrak{h} = T_1(H) \rightarrow T_x(H.x)$$
is surjective.  The following lemma relates separability of orbit maps for $H$ and separability of orbit maps for a closed subgroup.

\begin{lemma}\label{separability}
Let $X$ be an $H$-variety, $x \in X$, and $F$ a closed subgroup of $H$.

\begin{enumerate}
\item If $F/F_x \cong F.x$ and $H.x = F.x$, then $H/H_x \cong H.x$.
\item If $H/H_x \cong H.x$ and $\textup{Lie}(F \cap H_x) = \textup{Lie}(F) \cap \textup{Lie}(H_x)$, then $F/F_x \cong F.x$.
\end{enumerate}

\end{lemma}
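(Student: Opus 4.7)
The plan is to apply the tangent-space criterion just recalled: the bijective orbit morphism $H/H_x \to H.x$ is an isomorphism exactly when the differential $\mathfrak{h} = T_1(H) \to T_x(H.x)$ is surjective, and similarly with $F$ in place of $H$. Since the orbit map $F \to F.x$ is the restriction of $H \to H.x$, its differential at $1$ is the restriction of $\mathfrak{h} \to T_x(H.x)$ to $\mathfrak{f} \subseteq \mathfrak{h}$, with image lying in the subspace $T_x(F.x) \subseteq T_x(H.x)$.

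For part (1), the hypothesis $F/F_x \cong F.x$ gives surjectivity of $\mathfrak{f} \to T_x(F.x)$, and the assumption $F.x = H.x$ gives $T_x(F.x) = T_x(H.x)$. The differential in question factors as $\mathfrak{f} \hookrightarrow \mathfrak{h} \to T_x(H.x)$, so surjectivity of the composition forces surjectivity of $\mathfrak{h} \to T_x(H.x)$, which is all that is needed.

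For part (2), I would compute the kernel of $\mathfrak{f} \to T_x(F.x)$ and then argue by dimension count. The hypothesis $H/H_x \cong H.x$ means $\mathfrak{h} \to T_x(H.x)$ is surjective with kernel exactly $\textup{Lie}(H_x)$ (it factors through the isomorphism $\mathfrak{h}/\textup{Lie}(H_x) \cong T_x(H.x)$), so the kernel of its restriction to $\mathfrak{f}$ is $\mathfrak{f} \cap \textup{Lie}(H_x) = \textup{Lie}(F) \cap \textup{Lie}(H_x)$. Invoking the hypothesis $\textup{Lie}(F \cap H_x) = \textup{Lie}(F) \cap \textup{Lie}(H_x)$ together with $F_x = F \cap H_x$, this kernel is $\textup{Lie}(F_x)$. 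Thus $\mathfrak{f}/\textup{Lie}(F_x)$ injects into $T_x(F.x)$, and since $F.x$ is an $F$-orbit it is smooth, giving $\dim T_x(F.x) = \dim F.x = \dim F - \dim F_x = \dim \mathfrak{f}/\textup{Lie}(F_x)$. The injection is therefore an isomorphism, and $\mathfrak{f} \to T_x(F.x)$ is surjective.

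The only subtle step is identifying the kernel of $\mathfrak{h} \to T_x(H.x)$ as exactly $\textup{Lie}(H_x)$ under the separability hypothesis of (2); everything else is bookkeeping and a dimension count. Once that is in hand, the Lie algebra intersection hypothesis in (2) is precisely what converts the kernel $\mathfrak{f} \cap \textup{Lie}(H_x)$ into $\textup{Lie}(F_x)$, which is the decisive input.
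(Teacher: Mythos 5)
Your part (1) is exactly the paper's argument: the differential of the $F$-orbit map is the restriction of that of the $H$-orbit map, and $F.x=H.x$ identifies the target tangent spaces, so surjectivity passes upward. For part (2) you take a genuinely different, more self-contained route. The paper observes that the isomorphism $H/H_x \cong H.x$ is $F$-equivariant, transports the $F$-action to $H/H_x$, and then simply cites Springer (Exercise 5.5.9(5b)) for the fact that $\textup{Lie}(F\cap H_x)=\textup{Lie}(F)\cap\textup{Lie}(H_x)$ is equivalent to separability of $F\to F.(eH_x)$, whose stabilizer is $F_x=F\cap H_x$. You instead compute directly: separability of the $H$-orbit map forces $\ker\bigl(\mathfrak{h}\to T_x(H.x)\bigr)=\textup{Lie}(H_x)$ (your dimension count, or the standard separability of $H\to H/H_x$, justifies this), so the kernel of the restriction to $\mathfrak{f}$ is $\mathfrak{f}\cap\textup{Lie}(H_x)$, which the hypothesis converts into $\textup{Lie}(F_x)$; then smoothness of the orbit $F.x$ and $\dim F.x=\dim F-\dim F_x$ turn the resulting injection $\mathfrak{f}/\textup{Lie}(F_x)\hookrightarrow T_x(F.x)$ into a surjection of $\mathfrak{f}$ onto $T_x(F.x)$, which is the separability criterion. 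Both arguments are correct; yours buys independence from the cited exercise at the cost of invoking (standard) smoothness and dimension facts about orbits, while the paper's is shorter because it outsources precisely the equivalence you reprove.
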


\begin{proof}
Suppose that $F.x = H.x$.  This gives an equality $T_x(F.x)=T_x(H.x)$.  Consider the orbit maps
$$\psi_H: H \rightarrow H.x, \quad \psi_F: F \rightarrow F.x$$
The tangent map $d\psi_H: \mathfrak{h} \rightarrow T_x(H.x)$ restricts to the map $d\psi_F: \mathfrak{f} \rightarrow T_x(H.x)$.  The map $d\psi_F$ is surjective by virtue of the isomorphism $F/F_x \cong F.x$, hence $d\psi_H$ must also be surjective, establishing the isomorphism $H/H_x \cong H.x$.  This proves (1).

If $H/H_x \cong H.x$, then in particular this is an $F$-equivariant isomorphism.  We may therefore consider the action of $F$ on $x \in H.x$ by its action on $H/H_x$.  But the condition that $\textup{Lie}(F \cap H_x) = \textup{Lie}(F) \cap \textup{Lie}(H_x)$ is equivalent to the separability of the morphism from $F$ to its image in $H/H_x$ \cite[Exercise 5.5.9(5b)]{Sp}.
\end{proof}

\subsection{Grassmannian Varieties}\label{Grass}

Let $V$ be an $n$-dimensional vector space over $k$.  To avoid confusion with the notation for $r$-th Frobenius kernel of $G$, the Grassmannian of $m$-dimensional subspaces of $V$ will be denoted as $\mathfrak{Grass}_m(V)$.  This set obtains a variety structure via the Pl\"{u}cker embedding which identifies it with a closed subset of $\mathbb{P}(\Lambda^m(V))$ (strictly speaking we should choose a basis for $V$ to identity it with $\mathbb{A}^n$, but we will bypass this step).  If $W$ is a subspace with basis $\{w_1,\ldots,w_m\}$, then the embedding is given by
$$W \mapsto [w_1 \wedge \ldots \wedge w_m].$$
While the map from subspaces of $V$ to $\Lambda^m(V)$ is not well-defined (that is, it is dependent on the chosen basis), the map to the projective variety is well-defined.

There is a transitive $GL(V)$-action on $\mathfrak{Grass}_m(V)$, and this action is easily seen to be compatible with the $GL(V)$-action on $\mathbb{P}(\Lambda^m(V))$ inherited from the linear $GL(V)$-action on $\Lambda^m(V)$.  Thus $\mathfrak{Grass}_m(V)$ is a homogeneous space for $GL(V)$.  Fix a subspace $W \le V$ of dimension $m$, and let $P_W \le GL(V)$ be the parabolic subgroup defined to be all $g \in G$ such that $g.W = W$.  Then $\mathfrak{p} \subseteq \mathfrak{gl}(V)$ coincides with the subalgebra of all $X$ such that $X.W \subseteq W$.  By the reasoning in \cite[Corollary 5.5.4(b)]{Sp}, it follows that the orbit map $GL(V) \rightarrow GL(V).W$ is separable (here we are identifying the right side with a $GL(V)$-orbit in $\mathbb{P}(\Lambda^m(V))$ under the Pl\"{u}cker embedding), so that we get an isomorphism
$$GL(V)/P_W \rightarrow \mathfrak{Grass}_m(V)$$
of $GL(V)$-varieties.  This then gives an alternate, but isomorphic, variety structure for $\mathfrak{Grass}_m(V)$.

One can give $\mathfrak{Grass}_m(V)$ a variety structure by local coordinates.  Fix a decomposition $V = W \oplus C$.  The set of all subspaces $W^{\prime} \subseteq V$ that project isomorphically onto $W$ under this decomposition identifies with $\text{Hom}_k(W,C)$ in the following way: if the projection map
$$\text{pr}_{W}: W^{\prime} \rightarrow W$$
is an isomorphism, then we obtain a linear map from $W$ to $C$ by
$$W \xrightarrow{(\text{pr}_{W})^{-1}} W^{\prime} \xrightarrow{\text{pr}_{C}} C.$$
Conversely, given $f \in \text{Hom}_k(W,C)$, we get a vector subspace $W_f$ defined by
$$W_f := \{ v + f(v) \mid v \in W \}.$$

Declaring sets of the form above to be open subvarieties gives $\mathfrak{Grass}_m(V)$ a covering by affine varieties.  The Pl\"{u}cker embedding sends the subvariety $\text{Hom}_k(W,C)$ to an open subvariety of $\mathbb{P}(\Lambda^m(V))$ isomorphic to $\mathbb{A}^{m(n-m)}$.

That the image of $\text{Hom}_k(W,C)$ is open in $\mathbb{P}(\Lambda^m(V))$ can be easily observed.  Let $\{c_1, \ldots, c_{n-m}\}$ be a basis for $C$, and set set $y = c_1 \wedge \ldots \wedge c_{n-m} \in \Lambda^{n-m}(V)$.  There is then a linear map
$$\Lambda^m(V) \rightarrow \Lambda^n(V) \cong k, \quad x \mapsto x \wedge y.$$
The kernel of this map defines a closed subset $Z_C \subseteq \mathbb{P}(\Lambda^m(V))$, and the set of all subspaces that project isomorphically onto $W$ are precisely those whose image under the Pl\"{u}cker embedding lands in $\mathbb{P}(\Lambda^m(V)) - Z_C$.

Finally, we recall that the Grassmannian can be given as a geometric quotient (in addition to being given as the quotient of $GL(V)$ being acted on the right by $P_W$).  Let $\text{Hom}_k(U,V)^o$ be the open subset of injective linear maps of an $m$-dimensional vector space $U$ into $V$.  Every $m$-dimensional subspace of $V$ is the image of one of these maps, and any two maps defining the same subspace differ by an automorphism of $U$.  There is a right action of the reductive group $GL(U)$ on this variety, and by the preceding statements, the orbits of this action correspond to the points in $\mathfrak{Grass}_m(V)$.  Every $GL(U)$-orbit has dimension $m^2$ and is therefore closed, and since $GL(U)$ is reductive, the geometric quotient $\text{Hom}_k(U,V)^o/GL(U)$ exists \cite[Chapter 13, Theorem 3.4]{FR}, and can be seen to be isomorphic to $\mathfrak{Grass}_m(V)$.

The following result is well known, and will be is essential in the next section.  We provide a short proof for lack of a suitable reference.

\begin{lemma}\label{sub}
Let $A$ be a $k$-algebra, and suppose that $V$ is a finite dimensional $A$-module.  Then the set of $m$-dimensional $A$-submodules of $V$ is a closed subset of $\mathfrak{Grass}_m(V)$.
\end{lemma}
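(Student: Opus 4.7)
The plan is to reduce the problem to a single endomorphism: I would show that for any fixed $\phi \in \End_k(V)$, the subset $S_\phi := \{W \in \mathfrak{Grass}_m(V) \mid \phi(W) \subseteq W\}$ is closed, and then express the set of $A$-submodules as a finite intersection of such sets. Since $V$ is finite dimensional, $\End_k(V)$ is finite dimensional, so the image of the structure homomorphism $A \to \End_k(V)$ is spanned by finitely many elements $\phi_1,\ldots,\phi_s$. A subspace $W \le V$ is an $A$-submodule if and only if $\phi_i(W) \subseteq W$ for every $i$, so the set we care about equals $\bigcap_i S_{\phi_i}$, and closedness follows once each $S_{\phi_i}$ is closed.

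To prove that $S_\phi$ is closed, I would work in the affine charts described in \Cref{Grass}. Given any $W_0 \in \mathfrak{Grass}_m(V)$ together with a complement $V = W_0 \oplus C_0$, the corresponding chart identifies an open neighborhood of $W_0$ with $\Hom_k(W_0,C_0)$ via $f \mapsto W_f = \{v + f(v) \mid v \in W_0\}$. Decomposing $\phi$ into the block components $\phi_{11}\colon W_0 \to W_0$, $\phi_{21}\colon W_0 \to C_0$, $\phi_{12}\colon C_0 \to W_0$, $\phi_{22}\colon C_0 \to C_0$, a direct calculation shows that $\phi(W_f) \subseteq W_f$ is equivalent to
$$\phi_{21}(v) + \phi_{22}(f(v)) \;=\; f\bigl(\phi_{11}(v) + \phi_{12}(f(v))\bigr) \quad \text{for all } v \in W_0.$$
After choosing bases of $W_0$ and $C_0$, this becomes a finite set of (at most quadratic) polynomial equations in the matrix entries of $f$, so $S_\phi \cap \Hom_k(W_0,C_0)$ is closed in the chart. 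Since such charts cover $\mathfrak{Grass}_m(V)$ as $W_0$ and $C_0$ vary, $S_\phi$ is closed, as desired.

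I do not anticipate a serious obstacle; the only real point is the book-keeping in the local-chart calculation, and the key observation is simply that $f \mapsto \phi(v + f(v))$ is affine-linear in $f$ for each fixed $v$, while projecting onto $C_0$ modulo the graph of $f$ adds at most one more factor of $f$. If one prefers a coordinate-free presentation, an alternative is to use the Pl\"ucker representative $\omega \in \Lambda^m V$ of $W$ and note that $W = \ker(v \mapsto v \wedge \omega)$, so $\phi$-stability becomes the closed rank condition that the linear map $V \to \Lambda^{m+1}V \oplus \Lambda^{m+1}V$, $v \mapsto (v \wedge \omega,\, \phi(v) \wedge \omega)$, has rank $n-m$ rather than strictly larger; this route reaches the same conclusion globally but costs some additional linear algebra.
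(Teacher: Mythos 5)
Your argument is correct, but it takes a genuinely different route from the paper. You prove directly that for a \emph{single} endomorphism $\phi \in \End_k(V)$ the locus $S_\phi = \{W \mid \phi(W)\subseteq W\}$ is closed, by writing out the stability condition in the affine charts $\Hom_k(W_0,C_0)$ of Section \ref{Grass} as quadratic equations in the entries of $f$ (and your Pl\"ucker/rank-condition variant does the same job globally); the set of $A$-submodules is then the finite intersection $\bigcap_i S_{\phi_i}$ over a spanning set of the image of $A$ in $\End_k(V)$. The paper instead avoids all computation: it replaces $A$ by its image in $\End_k(V)$, observes that this image has a basis consisting of \emph{invertible} elements (since $cI + a$ is invertible for suitable $c$, and lies in the image when $a$ does), and then quotes the general fact that the fixed-point set $\mathfrak{Grass}_m(V)^g$ of a group element $g \in GL(V)$ acting morphically is closed, so the submodule locus is again a finite intersection of closed sets. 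Your version is more elementary and slightly more robust --- it handles arbitrary, non-invertible endomorphisms directly, so it does not need the invertibility trick (which implicitly uses that the identity lies in the image of $A$), and the statement that each $S_\phi$ is closed is of independent use --- at the cost of a local-chart or minor-vanishing computation; the paper's proof is shorter but leans on the separatedness-based fact about fixed-point sets of group elements recorded in its preliminaries.
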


\begin{proof} For each $g \in GL(V)$, the set of $m$-dimensional subspaces $Z$ such that $g.Z=Z$ is the subset $\mathfrak{Grass}_m(V)^g$, that is closed in $\mathfrak{Grass}_m(V)$.  The action of $A$ on $V$ factors through the image of $A$ in $\text{End}_k(V)$, so we may replace $A$ by this image, and therefore may assume $A$ is a subalgebra of $\text{End}_k(V)$.  By the first statement, the proof will follow from showing that $A$ can be generated as an algebra by elements in $GL(V)$.  In fact, it has a basis consisting of such elements.  For any $a \in \text{End}_k(V)$, there exists a scalar map $cI$ such that $cI+a$ is invertible (just pick $c$ so that $-c$ is not an eigenvalue of $a$).  If $a \in A$ then $cI+a \in A$.  It is now clear that one can now choose a basis for $A$ that includes $I$, and consists entirely of elements in $GL(V)$.
\end{proof}

\section{Varieties of $G_r$-summands}\label{varietysection}

\subsection{}
Throughout this section we fix $V$ to be a finite dimensional $G$-module given by the homomorphism
$$\phi:G \rightarrow GL(V).$$
Let $\mathcal{C}$ be the centralizer subgroup in $GL(V)$ of $d\phi(\text{Dist}(G_r)) \le \text{End}_k(V)$.  That is,
$$\mathcal{C} = \text{End}_{G_r}(V)^{\times} = \text{Aut}_{G_r}(V).$$
This is a connected subgroup of $GL(V)$ (it is an open subset of its Lie algebra), and is normalized by $G$ (acting via $\phi$).  Set
$$\mathcal{S} = G \ltimes \mathcal{C},$$
which is also a connected linear group.  The rational actions of $G$ and $\mathcal{C}$ extend to a rational $\mathcal{S}$-module structure on $V$.

\begin{defn}
For each $m \ge 1$, $\mathfrak{Grass}_m(V)^{G_r}$ is the closed subvariety of $\mathfrak{Grass}_m(V)$ consisting of all $G_r$-submodules of $V$ of dimension $m$, while $\mathfrak{X}^V_m$ is the open subvariety of $\mathfrak{Grass}_m(V)^{G_r}$ consisting of all $G_r$-submodules that have a $G_r$-complement in $V$ (that is, the set of all $m$-dimensional $G_r$-summands of $V$).
\end{defn}

By Lemma \ref{sub}, when $\mathfrak{Grass}_m(V)^{G_r}$ is non-empty it is a projective variety.  It is also stable under the each of the actions of $G,\mathcal{C}$, and $\mathcal{S}$, on $\mathfrak{Grass}_m(V)$.    That $\mathfrak{X}^V_m$ is an open subvariety of $\mathfrak{Grass}_m(V)^{G_r}$ follows from the local coordinates given in Section \ref{Grass}, since $\mathfrak{X}^V_m$ is the intersection of $\mathfrak{Grass}_m(V)^{G_r}$ with all open subsets of the form $\text{Hom}_k(W,C)$ for those subspaces $C$ that are $G_r$-submodules.

\begin{prop}\label{summandmeansstable}
If $M$ is a $G_r$-summand of $V$, then $M$ is $G$-twist stable.
\end{prop}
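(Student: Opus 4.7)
The plan is to reduce the twist-stability assertion to the simpler claim that for every $g \in G$, the subspace $g.M \subseteq V$ is isomorphic to $M$ as a $G_r$-module. First, since $G_r$ is normal in $G$, the relation
\[
d\phi(x) \cdot \phi(g) = \phi(g) \cdot d\phi(\text{Ad}(g^{-1})(x))
\]
in $\text{End}_k(V)$ for $x \in \text{Dist}(G_r)$ shows that $g.M$ is again a $G_r$-submodule of $V$, and that the linear bijection $\beta_g: M \to g.M$ sending $m \mapsto g.m$ intertwines the original $G_r$-action on $g.M$ with the twist of the $G_r$-action on $M$ by the automorphism $\text{Ad}(g^{-1})$. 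Hence $g.M$ is $G_r$-isomorphic to a conjugation twist of $M$, and it suffices to show that $g.M \cong M$ as $G_r$-modules for every $g \in G$.

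To prove the latter, I would exploit connectedness of $G$ together with a fixed $G_r$-complement. Write $V = M \oplus N$ as a $G_r$-decomposition. For any $g_0 \in G$, both $g_0.M$ and $g_0.N$ are $G_r$-submodules, so the projection $\pi_{g_0}: V \to g_0.M$ along $g_0.N$ is $G_r$-equivariant. Consequently, for any $G_r$-submodule $M' \subseteq V$ of dimension $\dim M$ with $M' \cap g_0.N = 0$, the restriction $\pi_{g_0}|_{M'}$ is a $G_r$-isomorphism $M' \xrightarrow{\sim} g_0.M$.

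Define $f(g) := [g.M]$, viewed as an isomorphism class of $G_r$-module. For each $g_0 \in G$, the set
\[
U_{g_0} = \{\, g \in G : g.M \cap g_0.N = 0 \,\}
\]
is the preimage under the morphism $G \to \mathfrak{Grass}_{\dim M}(V)$, $g \mapsto g.M$, of the open affine chart around $g_0.M$ determined by the decomposition $V = g_0.M \oplus g_0.N$ from Section \ref{Grass}. Thus $U_{g_0}$ is open in $G$, contains $g_0$, and on it $f$ takes the constant value $[g_0.M]$ by the previous paragraph. So $f$ is locally constant, and hence constant because $G$ is connected; taking $g_0 = 1$ yields $g.M \cong M$ as $G_r$-modules for every $g \in G$.

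The main technical points are the $G_r$-equivariance in the opening paragraph and the openness of $U_{g_0}$. Both reduce to routine consequences of the normality of $G_r$ in $G$ and the local coordinate description of Grassmannians in Section \ref{Grass}, so I do not anticipate a serious obstacle beyond bookkeeping with the conjugation conventions that relate $g.M$ to $^gM$.
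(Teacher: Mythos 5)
Your argument is correct, but it proceeds differently from the paper's proof. The paper first shows that $\mathcal{C}=\mathrm{Aut}_{G_r}(V)$ acts transitively on the set of $G_r$-summands of a fixed isomorphism type (by gluing isomorphisms on a summand and a chosen complement), invokes Krull--Schmidt to get finitely many $\mathcal{C}$-orbits in $\mathfrak{X}^V_m$, and then uses that $G$ permutes these finitely many orbits: the stabilizer of a closed $\mathcal{C}$-orbit is a closed finite-index subgroup of the connected group $G$, hence all of $G$, and an induction over the remaining orbits gives $^gM\cong M$. You instead prove directly that $g\mapsto [g.M]$ is a locally constant function on $G$: for $g$ in the open set $U_{g_0}=\{g: g.M\cap g_0.N=0\}$ (the preimage of the affine chart of \cref{Grass} attached to the $G_r$-decomposition $V=g_0.M\oplus g_0.N$), the $G_r$-equivariant projection along $g_0.N$ restricts to an isomorphism $g.M\xrightarrow{\sim} g_0.M$, and connectedness of $G$ finishes the argument. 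Your route is more elementary and self-contained --- it needs neither Krull--Schmidt nor the automorphism group $\mathcal{C}$, only the chart description of the Grassmannian and connectedness of $G$ --- while the paper's route does extra work that pays off later: the transitivity of $\mathcal{C}$ on summands of a given type established inside that proof is exactly what makes $\mathfrak{X}^V_M$ a homogeneous space for $\mathcal{C}$ and $\mathcal{S}$, which is the input for \cref{structure} and for \cref{differentways}. Your deferral of the $g.M$ versus $^gM$ conjugation convention is harmless, since you prove $g.M\cong M$ for every $g\in G$ and the twist of $M$ realized by $g.M$ is $^{g^{\pm1}}M$ in either convention.
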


\begin{proof}
For any $g \in G$, the subspace $g.M \le V$ is also a $G_r$-summand of $V$, easily seen to be isomorphic to $^gM$.  By the Krull-Schmidt theorem, there are only finitely many possible isomorphism types for $^gM$.  For any $f \in \mathcal{C}$, $f.M$ is also a $G_r$-summand of $V$, and it is isomorphic to $M$.  We claim that $\mathcal{C}$ acts transitively on the set of all summands of a fixed isomorphism type.  Indeed, let $M^{\prime}$ be a $G_r$-summand of $V$ isomorphic to $M$, and choose $N$ and $N^{\prime}$ to be $G_r$-complements in $V$ of $M$ and $M^{\prime}$ respectively.  Choose $G_r$-module isomorphisms
$$f_M: M \rightarrow M^{\prime}, \quad f_N: N \rightarrow N^{\prime}.$$
Given $v \in V$, write $v=x+y, x \in M, y \in N$.  The linear map sending $v \mapsto f_M(x) + f_N(y)$ is an element of $\mathcal{C}$ (it is in $GL(V)$ and commutes with the image of $\text{Dist}(G_r)$ since $f_M$ and $f_N$ are $\text{Dist}(G_r)$-homomorphisms).  Thus there is an element in $\mathcal{C}$ that sends $M$ to $M^{\prime}$.

We now consider the action of $\mathcal{S}$ on the variety $X^V_m$.  There are only finitely many $\mathcal{C}$-orbits in $X^V_m$, and $G$, since it normalizes $\mathcal{C}$, permutes these orbits.  There must be at least one closed $\mathcal{C}$-orbit.  The subgroup of $G$ that stabilizes this $\mathcal{C}$-orbit is then a closed subgroup of $G$ of finite index, hence must be $G$ itself since $G$ is connected.  We can now proceed inductively by considering the action of $G$ on the complement of this closed $\mathcal{C}$-orbit in $\mathfrak{X}^V_m$ to see that $G$ must stabilize each $\mathcal{C}$-orbit.  Thus $^gM \cong M$, so that $M$ is $G$-twist stable. 

\end{proof}

\begin{defn}\label{summandset}
Let $M$ be a $G_r$-summand of $V$.  The set $\mathfrak{X}^V_M$ denotes the set of all $G_r$-summands of $V$ that are isomorphic to $M$.
\end{defn}

As shown in the proof of the last proposition, $\mathfrak{X}^V_M$ is a homogeneous space for both $\mathcal{C}$ and $\mathcal{S}$.  This then provides it with a $G$-variety structure.  Our next goal is to pin down this structure a bit further.

\begin{theorem}\label{structure}
There is an isomorphism of $\mathcal{S}$-varieties $\mathfrak{X}^V_M \cong \mathcal{S}/\mathcal{S}_M$.  In particular, this isomorphism is $G$-equivariant.
\end{theorem}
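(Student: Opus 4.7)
The plan is to apply Lemma \ref{separability}(1) with $F = \CC$ and $H = \SS$, reducing the theorem to establishing $\CC/\CC_M \cong \mathfrak{X}^V_M$. From the argument inside the proof of Proposition \ref{summandmeansstable}, $\CC$ acts transitively on $\mathfrak{X}^V_M$, so the orbit map $c \mapsto c.M$ factors through a bijective $\CC$-equivariant morphism $\CC/\CC_M \to \mathfrak{X}^V_M$. Per the discussion preceding Lemma \ref{separability}, it then suffices to verify separability of this orbit map, that is, surjectivity of the tangent map $d\sigma_M\colon \fc \to T_M(\mathfrak{X}^V_M)$.

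The next step is to identify $T_M(\mathfrak{X}^V_M)$ explicitly. Fixing a $G_r$-complement $N \subseteq V$ to $M$, the local chart $\text{Hom}_k(M,N) \hookrightarrow \mathfrak{Grass}_m(V)$ from Section \ref{Grass} sends $f$ to $W_f = \{m + f(m) \mid m \in M\}$. A direct check shows that $W_f$ is $G_r$-stable precisely when $f \in \text{Hom}_{G_r}(M,N)$, in which case $W_f \cong M$ as a $G_r$-module (via projection to $M$) with $N$ as a $G_r$-complement, so $W_f \in \mathfrak{X}^V_M$. Hence $\text{Hom}_{G_r}(M,N)$ is an open neighborhood of $M$ inside $\mathfrak{X}^V_M$, and $T_M(\mathfrak{X}^V_M) = \text{Hom}_{G_r}(M,N)$.

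For the tangent map, differentiating the action $(1 + tf).M$ at $t = 0$ in this chart gives $d\sigma_M(f)\colon m \mapsto \text{pr}_N(f(m))$ for $f \in \fc = \text{End}_{G_r}(V)$. Surjectivity is then immediate from $V = M \oplus N$ as $G_r$-modules: any $g \in \text{Hom}_{G_r}(M,N)$ is hit by the endomorphism $\tilde g \in \text{End}_{G_r}(V)$ that restricts to $g$ on $M$ and to $0$ on $N$. This yields $\CC/\CC_M \cong \mathfrak{X}^V_M$, and Lemma \ref{separability}(1) promotes it to $\SS/\SS_M \cong \mathfrak{X}^V_M$; $G$-equivariance is then automatic since $G \subseteq \SS$. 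The substantive step is the tangent space identification via the local chart of Section \ref{Grass}; after that, surjectivity of $d\sigma_M$ is forced by the existence of a $G_r$-splitting of the restriction map $\text{End}_{G_r}(V) \to \text{Hom}_{G_r}(M,N)$, and the rest is formal manipulation with Lemma \ref{separability}.
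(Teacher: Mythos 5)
Your proof is correct, and it shares the paper's skeleton in one respect: both arguments first use Lemma \ref{separability}(1) to reduce the theorem to the separability of the $\mathcal{C}$-orbit map $\mathcal{C} \rightarrow \mathcal{C}.M$ (transitivity of $\mathcal{C}$ on $\mathfrak{X}^V_M$ coming from the proof of Proposition \ref{summandmeansstable}). Where you diverge is in how that separability is verified. The paper does not compute any tangent space of the orbit: it invokes Lemma \ref{separability}(2) with ambient group $GL(V)$, using the separability of $GL(V) \rightarrow GL(V).M \subseteq \mathfrak{Grass}_m(V)$ recorded in Section \ref{Grass} and the observation that $\textup{Lie}(\mathcal{C} \cap P) = \textup{Lie}(\mathcal{C}) \cap \textup{Lie}(P)$ for the parabolic $P$ stabilizing $M$, which holds because both $\mathcal{C}$ and $P$ are open subsets of their Lie algebras inside $\mathfrak{gl}(V)$. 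You instead work in the affine chart $\textup{Hom}_k(M,C)$ of Section \ref{Grass} (with $C=N$ a $G_r$-complement), check that $W_f$ is $G_r$-stable exactly when $f \in \textup{Hom}_{G_r}(M,N)$, conclude $T_M(\mathfrak{X}^V_M) = \textup{Hom}_{G_r}(M,N)$, and show the differential $f \mapsto \textup{pr}_N \circ f|_M$ of the orbit map is surjective by extending any $G_r$-homomorphism by zero on $N$; all of these steps are sound. The paper's route is shorter and avoids any local computation, piggybacking entirely on the known $GL(V)$ case; your route is more hands-on but yields as a byproduct the explicit local identification of $\mathfrak{X}^V_M$ near $M$ with $\textup{Hom}_{G_r}(M,N)$, which foreshadows (and essentially reproves locally) the description that Theorem \ref{maincase} later establishes globally under the hypotheses of Proposition \ref{differentways}.
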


\begin{proof}
Since $\mathfrak{X}^V_M = \mathcal{S}.M$, we need to establish that the orbit map, $\mathcal{S} \rightarrow \mathcal{S}.M$, is separable.  By Lemma \ref{separability}(1), it suffices to establish the separability of the orbit map $\mathcal{C} \rightarrow \mathcal{C}.M$.  Now $\mathfrak{X}^V_M$ is an orbit for $\mathcal{C}$ acting on $\mathfrak{Grass}_m(V)$.  The orbit map $GL(V) \rightarrow GL(V).M$ is separable.  Let $P$ be the parabolic subgroup of $GL(V)$ stablizing $M$.  We have that  $\text{Lie}(\mathcal{C} \cap P) = \text{Lie}(\mathcal{C}) \cap \text{Lie}(P)$ since both $\mathcal{C}$ and $P$ are open subsets of their Lie algebras (identifying $GL(V)$ as an open subset of $\mathfrak{gl}(V)$).  The separability of the orbit map $\mathcal{C} \rightarrow \mathcal{C}.M$ then follows from Lemma \ref{separability}(2).
\end{proof}

\begin{remark}
We should point out here that the varieties of summands and submodules just introduced are different from the varieties parameterizing iso-classes of modules for a finite dimensional algebra.  See, for example, the varieties studied in \cite{HZ}.
\end{remark}

\subsection{}
More detailed information about $\mathfrak{X}^V_M$ can be obtained if we know more about the relationship between $M$ and the other $G_r$-summands of $V$.  We begin first with the case that all summands of $V$ are isomorphic.  Let
$$\text{Hom}_{G_r}(M,M^{\oplus n})^o \subseteq \text{Hom}_{G_r}(M,M^{\oplus n})$$
be the open subvariety of injective $G_r$-homomorphisms from $M$ to $M^{\oplus n}$.  This variety has a right action by $\text{Aut}_{G_r}(M)$.

\begin{prop}
Suppose that $V \cong M^{\oplus n}$ over $G_r$, and that $M$ is either simple or projective over $G_r$.  Then there is a surjective morphism
$$\textup{Hom}_{G_r}(M,V)^o \rightarrow \mathfrak{X}^V_M,$$
the fibers of which are the $\textup{Aut}_{G_r}(M)$-orbits of $\textup{Hom}_{G_r}(M,V)^o$.  If $M$ is a simple $G_r$-module, then $$\mathfrak{X}^V_M = \mathfrak{Grass}_m(V)^{G_r} \cong \mathbb{P}^{\, n-1}.$$
\end{prop}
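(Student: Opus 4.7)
The plan is to exhibit the natural map $\psi \colon \textup{Hom}_{G_r}(M,V)^o \to \mathfrak{Grass}_m(V)^{G_r}$ given by $f \mapsto f(M)$ (with $m = \dim M$), and first verify that its image lies in $\mathfrak{X}^V_M$. When $M$ is simple, $V \cong M^{\oplus n}$ is semisimple, so every $G_r$-submodule automatically has a $G_r$-complement. When $M$ is projective, the Frobenius algebra property of $\textup{Dist}(G_r)$ makes projective $G_r$-modules injective; in particular $V \cong M^{\oplus n}$ is injective, so every $G_r$-submodule isomorphic to $M$ splits off. Either way, $f(M) \in \mathfrak{X}^V_M$.

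Next, to see that $\psi$ is a morphism, I would restrict the geometric quotient morphism $\textup{Hom}_k(M,V)^o \to \mathfrak{Grass}_m(V)$ from Section \ref{Grass} to the subvariety $\textup{Hom}_{G_r}(M,V)^o \subseteq \textup{Hom}_k(M,V)^o$; this immediately produces $\psi$ as a morphism. Surjectivity is direct: any $M' \in \mathfrak{X}^V_M$ is the image of the composition of any $G_r$-isomorphism $M \xrightarrow{\sim} M'$ with the inclusion $M' \hookrightarrow V$. For the fiber description, if $f(M) = g(M)$ for two elements of $\textup{Hom}_{G_r}(M,V)^o$, then viewing $g$ as an isomorphism onto its image gives $g^{-1}\circ f \in \textup{Aut}_{G_r}(M)$, so $\psi^{-1}(f(M))$ is precisely the precomposition orbit of $f$ under $\textup{Aut}_{G_r}(M)$.

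For the final statement with $M$ simple, I would note that every $G_r$-submodule of the semisimple module $V \cong M^{\oplus n}$ is a direct sum of copies of $M$; the only $m$-dimensional such submodule is a single copy of $M$, hence is isomorphic to $M$ and (by semisimplicity) a summand. This yields $\mathfrak{X}^V_M = \mathfrak{Grass}_m(V)^{G_r}$. By Schur's lemma, $\textup{End}_{G_r}(M) = k$, so $\textup{Hom}_{G_r}(M,V) \cong k^n$ with open injective locus $\mathbb{A}^n \setminus \{0\}$, and $\textup{Aut}_{G_r}(M) = k^\times$ acts by scalar multiplication. The orbit space, identified with $\mathfrak{X}^V_M$ by the first part, is thus $\mathbb{P}^{n-1}$.

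I expect the main subtlety to be the formal verification that the restriction of the Grassmannian geometric quotient map really does yield $\psi$ as a morphism onto $\mathfrak{X}^V_M$ with the correct orbit-space description, as opposed to a merely set-theoretic statement; once this identification with the construction from Section \ref{Grass} is in hand, everything else reduces to routine applications of semisimplicity and Schur's lemma.
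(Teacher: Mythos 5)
Your proposal is correct and takes essentially the same route as the paper: restrict the morphism $\textup{Hom}_k(M,V)^o \to \mathfrak{Grass}_m(V)$ from Section \ref{Grass} to $\textup{Hom}_{G_r}(M,V)^o$, note that images of injective $G_r$-maps are summands when $M$ is simple or projective, identify the fibers as $\textup{Aut}_{G_r}(M)$-orbits, and in the simple case use Schur's lemma to get $(k^{\oplus n}\setminus\{0\})/\mathbb{G}_m \cong \mathbb{P}^{n-1}$. The only difference is that you spell out details the paper leaves implicit (semisimplicity of $M^{\oplus n}$, self-injectivity of $\operatorname{Dist}(G_r)$ for the projective case).
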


\begin{proof}
First we note the bijection between the sets above.  In general a $G_r$-summand of $V$ isomorphic to $M$ comes from an injective $G_r$-module map $M$ to $V$.  If $M$ is simple or projective over $G_r$ then the image of every such map is a $G_r$-summand.  Any two injections defining the same summand differ by a $G_r$-automorphism of $M$.  This establishes that the morphism from $\textup{Hom}_k(M,V)^o$ to $\mathfrak{Grass}_m(V)$ in Section 2.3 restricts to one from $\textup{Hom}_{G_r}(M,V)^o$ to $\mathfrak{X}^V_M$, and the fibers are then clearly the $\textup{Aut}_{G_r}(M)$-orbits.

When $M$ is simple, then every $G_r$-submodule of $V$ of dimension $m$ is isomorphic to $M$, so that $\mathfrak{X}^V_M = \mathfrak{Grass}_m(V)^{G_r}$.. We also have
$$\text{Aut}_{G_r}(M) \cong \mathbb{G}_m,$$
and
$$\text{Hom}_{G_r}(M,V) \cong k^{\oplus n},$$
so that
$$\mathfrak{X}^V_M \cong (k^{\oplus n} - \{0\})/\mathbb{G}_m.$$
\end{proof}

\subsection{}

Let $N$ be a $G_r$-complement to the summand $M$ of $V$, and let $\text{pr}_M$ and $\text{pr}_N$ denote the projection maps of $V$ onto $M$ and $N$ respectively.  These are $G_r$-module maps.  We note that for any subspace $W \le V$, the projection map $\text{pr}_M \mid_W$ is injective if and only if $W \cap N = \{0\}$.

We are now interested in the ``opposite" situation from that considered in the previous subsection.  That is, the case where every $G_r$-summand of $V$ that is isomorphic to $M$ is a complement to $N$ (this evidently fails to hold if $N$ contains a summand that is isomorphic to a summand of $M$).  In this case, it can quickly be worked out that there is a bijection between $\mathfrak{X}^V_M$ and $\text{Hom}_{G_r}(M,N)$.  Much of the work that now follows serves to make sure that this is true as $G$-varieties, since there is no obvious $G$-variety structure on $\text{Hom}_{G_r}(M,N)$ if one of these modules is not a $G$-module (or, not yet \textit{known} to be one at any rate).

We begin with a proposition that explores various ways to characterize this case of interest.

\begin{prop}\label{differentways}
Let $V$ be a $G$-module, with $V \cong M \oplus N$ over $G_r$.  The following conditions are equivalent:

\begin{enumerate}
\item Every $G_r$-summand isomorphic to $M$ is a $G_r$-complement to $N$.
\item Every $G_r$-summand isomorphic to $N$ is a $G_r$-complement to $M$.
\item Every $G_r$-summand isomorphic to $M$ is a $G_r$-complement to every $G_r$-summand isomorphic to $N$.
\item $\textup{Aut}_{G_r}(V) \cong \textup{Aut}_{G_r}(M) \times \textup{Hom}_{G_r}(M,N) \times \textup{Hom}_{G_r}(N,M) \times \textup{Aut}_{G_r}(N)$.
\item Every $X \in \textup{Hom}_{G_r}(M,N) \oplus \textup{Hom}_{G_r}(N,M)$ is a nilpotent endomorphism of $V$.
\end{enumerate}

\end{prop}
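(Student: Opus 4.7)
The plan is to translate every condition into the Peirce decomposition of the finite-dimensional $k$-algebra $R = \text{End}_{G_r}(V)$ associated with the idempotents $e_M, e_N$ arising from $V = M \oplus N$. Writing
$$R = A \oplus P \oplus Q \oplus B,$$
with $A = \text{End}_{G_r}(M)$, $B = \text{End}_{G_r}(N)$, $P = \text{Hom}_{G_r}(N,M)$, $Q = \text{Hom}_{G_r}(M,N)$, and representing elements as block matrices $\phi = \begin{pmatrix} a & b \\ c & d \end{pmatrix}$, we have $\text{Aut}_{G_r}(V) = R^{\times}$, and the multiplicative relations $PP = QQ = 0$, $PQ \subseteq A$, $QP \subseteq B$ will be used throughout. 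By Proposition \ref{summandmeansstable}, every summand of $V$ isomorphic to $M$ has the form $\phi.M = \{a(m) + c(m) : m \in M\}$ for some $\phi \in R^{\times}$. A direct calculation gives $\phi.M \cap N = c(\ker a)$, and since $\phi$ is injective, $c$ is injective on $\ker a$; hence $\phi.M$ is a complement to $N$ if and only if $a \in A^{\times}$ (the spanning condition follows by dimension). Consequently (1) asserts that the $(M,M)$-block of every $\phi \in R^{\times}$ lies in $A^{\times}$, and (2) is the mirror statement for the $(N,N)$-block.

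For (4) $\Leftrightarrow$ (5), computing $X^{2n} = \text{diag}((bc)^n, (cb)^n)$ with $X = \begin{pmatrix} 0 & b \\ c & 0 \end{pmatrix}$ shows that (5) is equivalent to: every $pq \in PQ$ and $qp \in QP$ is nilpotent. Since $A$ and $B$ are Artinian, nil two-sided ideals are nilpotent, so this is in turn equivalent to $PQ \subseteq J(A)$ and $QP \subseteq J(B)$. Granting this, if $a, d$ are units then the factorization $\phi = \text{diag}(a, d)\bigl(I + \text{diag}(a^{-1}, d^{-1})X\bigr)$ expresses $\phi$ as a unit, because the second factor is $I$ plus a nilpotent. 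Conversely, if $\phi \in R^{\times}$ has inverse $\phi^{-1} = \begin{pmatrix} \alpha & \beta \\ \gamma & \delta \end{pmatrix}$, then $\phi\phi^{-1} = I$ gives $a\alpha = I_M - b\gamma$ with $b\gamma \in PQ \subseteq J(A)$, hence $a\alpha \in A^{\times}$; similarly $\phi^{-1}\phi = I$ forces $\alpha a \in A^{\times}$, and the elementary observation that both $a\alpha$ and $\alpha a$ being units implies $a \in A^{\times}$ completes (4).

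To close the loop, I prove (1) $\Rightarrow$ (5) (and symmetrically (2) $\Rightarrow$ (5)) contrapositively using Fitting's lemma. If some $pq \in PQ$ is not nilpotent, Fitting's lemma applied to $pq$ in the Artinian algebra $A$ yields a $G_r$-stable decomposition $M = M_1 \oplus M_0$ with $pq|_{M_1}$ an automorphism and $M_1 \ne 0$. Then $q|_{M_1} : M_1 \hookrightarrow N$ is a split $G_r$-monomorphism with retraction $(pq|_{M_1})^{-1} \circ p$, so $q(M_1)$ is a $G_r$-summand of $N$ isomorphic to $M_1$. Choose a $G_r$-complement $N_0$ to $q(M_1)$ in $N$ and define $\phi \in \text{Aut}_{G_r}(V)$ to act as the identity on $M_0 \oplus N_0$ and swap $M_1 \leftrightarrow q(M_1)$ via $q|_{M_1}$ and its inverse; then $\phi \in R^{\times}$ but its $(M,M)$-block is the projection $M \to M_0 \hookrightarrow M$, whose kernel contains $M_1 \ne 0$, contradicting (1). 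Finally, (3) trivially implies (1) and (2); for (5) $\Rightarrow$ (3), given summands $M' = \phi.M$ and $N' = \psi.N$ one checks $\phi^{-1}(M' \cap N') = M \cap (\phi^{-1}\psi).N$, and since $\phi^{-1}\psi \in R^{\times}$ and (5) yields (2), this intersection vanishes; a dimension count then produces $V = M' \oplus N'$.

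The main obstacle is the Fitting-lemma exchange construction: the technical point is recognizing that the Fitting components of $pq$ give a common $G_r$-summand of $M$ and $N$ (with $G_r$-complements) so that the swap is a genuine $G_r$-automorphism. Once this is in hand, everything else is bookkeeping with the block-matrix relations.
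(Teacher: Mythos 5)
Your argument is correct in substance, but it routes through a different set of implications than the paper and, in one place, uses a genuinely different idea. The paper proves the chain $(1)\Rightarrow(2)\Rightarrow(3)\Rightarrow(4)\Rightarrow(5)\Rightarrow(4)\Rightarrow(1)$; its delicate point is $(5)\Rightarrow(4)$, where after a factorization/determinant argument gives one containment, it must still show the product set is a subgroup, and it concludes equality by an open-subgroup-of-a-connected-group argument inside $\operatorname{Aut}_{G_r}(V)$. You instead translate everything into the Peirce decomposition of $R=\operatorname{End}_{G_r}(V)$: conditions (1) and (2) become ``the diagonal blocks of every unit are units,'' which makes the containment $R^{\times}\subseteq A^{\times}\times P\times Q\times B^{\times}$ (half of (4)) literally the same statement as (1) and (2); you then obtain that containment directly from the two equations $\phi\phi^{-1}=\phi^{-1}\phi=I$, so no connectedness or subgroup-closure argument is needed, and the reverse containment comes from essentially the same triangular factorization as in the paper. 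Your contrapositive proof of $(1)\Rightarrow(5)$ via Fitting's lemma --- extracting a common $G_r$-summand $M_1\cong q(M_1)$ of $M$ and $N$ and building the swap automorphism --- is genuinely different: the paper never proves $(1)\Rightarrow(5)$ directly but passes through (2), (3), (4). Your route buys a transparent invertibility criterion for block maps and an explicit witness to the failure of (1); the paper's route stays more elementary (no Fitting, no radical) at the cost of the connectedness argument.

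Three repairs are needed, all minor. First, the justification ``since $A$ and $B$ are Artinian, nil two-sided ideals are nilpotent, so this is equivalent to $PQ\subseteq J(A)$'' does not prove the forward direction: knowing each single product $pq$ is nilpotent does not by itself make the ideal spanned by $PQ$ nil. The claim is in fact true (for $a,b\in A$ one has $a\,pq\,b=(ap)(qb)$, again a product of an element of $P$ with an element of $Q$, hence nilpotent, so $1-a\,pq\,b$ is a unit and $pq\in J(A)$), but you never actually need it: in your factorization the second factor is $I$ plus an off-diagonal element, which is nilpotent directly by (5), and in the ``conversely'' step $b\gamma$ and $\beta c$ are single products, nilpotent directly, so $I_M-b\gamma$ and $I_M-\beta c$ are units; rephrase accordingly. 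Second, you never state an implication out of (4); it is immediate from your own translation (the containment $R^{\times}\subseteq A^{\times}\times P\times Q\times B^{\times}$ is exactly the reformulated (1) and (2)), but you must say so to close the cycle of equivalences. Third, the retraction of $q|_{M_1}$ should be $(pq|_{M_1})^{-1}\circ \pi_{M_1}\circ p$, where $\pi_{M_1}\colon M\to M_1$ is the Fitting projection, since $p$ takes values in $M$ rather than $M_1$; with that insertion the splitting, and hence the swap automorphism, is as you describe.
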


\begin{proof}
$(1) \Rightarrow (2)$: As shown in the proof of Proposition \ref{summandmeansstable}, for every $f \in \text{Aut}_{G_r}(V)$ the subspace $f.M$ is a $G_r$-summand of $V$ that is isomorphic to $M$, and every such summand is of this form.  The same holds for summands isomorphic to $N$.  If each summand isomorphic to $M$ is a $G_r$-complement to $N$, then $f.M \cap N = \{0\}$ for every $f \in \text{Aut}_{G_r}(V)$.  This immediately implies that we also have $M \cap f^{-1}.N = \{0\}$ for every $f \in \text{Aut}_{G_r}(V)$, which implies (2).

$(2) \Rightarrow (3)$: Suppose that (3) does not hold.  Then there are $f_1, f_2 \in \text{Aut}_{G_r}(V)$ such that $f_1.M \cap f_2.N \ne \{0\}$.  This implies that $M \cap ({f_1}^{-1}f_2).N \ne \{0\}$, which cannot happen when (2) holds.

$(3) \Rightarrow (4)$: $\text{Aut}_{G_r}(V)$ is a subset of $\text{End}_{G_r}(V)$, and the latter has a vector space decomposition into
$$\textup{End}_{G_r}(M) \oplus \textup{Hom}_{G_r}(M,N) \oplus \textup{Hom}_{G_r}(N,M) \oplus \textup{End}_{G_r}(N).$$
Viewing each as affine varieties, we can write this decomposition as
$$\text{End}_{G_r}(V) \cong \textup{End}_{G_r}(M) \times \textup{Hom}_{G_r}(M,N) \times \textup{Hom}_{G_r}(N,M) \times \textup{End}_{G_r}(N).$$
If $f \in Aut_{G_r}(V)$, then since $f.M$ is a complement to $N$, and $f.N$ is a complement to $M$, we have isomorphisms
$$\text{pr}_M: f.M \xrightarrow{\sim} M, \quad \text{pr}_N: f.N \xrightarrow{\sim} N.$$
Therefore, as an element in $\text{End}_{G_r}(V)$, $f$ has $\text{End}_{G_r}(M)$-component contained in $\text{Aut}_{G_r}(M)$, and likewise for the $\text{End}_{G_r}(N)$-component.  This proves the inclusion
$$\text{Aut}_{G_r}(V) \subseteq \text{Aut}_{G_r}(M) \times \text{Hom}_{G_r}(M,N) \times \text{Hom}_{G_r}(N,M) \times \text{Aut}_{G_r}(N).$$
Conversely, suppose that $f$ is in the set on the right.  This means that $\text{pr}_M(f.M)=M$ and $\text{pr}_N(f.N)=N$, so that $f.M$ and $f.N$ are summands isomorphic to $M$ and $N$ respectively.  They are therefore $G_r$-complements by (3), and so $f.V=V$, hence $f \in \text{Aut}_{G_r}(V)$, proving that the inclusion above is an equality.

$(4) \Rightarrow (5)$: Let $X \in \text{Hom}_{G_r}(M,N) \oplus \text{Hom}_{G_r}(N,M)$.  Then $X$ is nilpotent if and only if $0$ is its only eigenvalue.  However, the description of $\text{Aut}_{G_r}(V)$ in (4) implies that any non-zero scalar multiple of the identity map on $V$, when added to $X$, is still invertible.  If $X$ had an eigenvalue $c \in k^{\times}$, then adding $-c$ times the identity map to $X$ would give an endomorphism with non-trivial kernel, hence not be invertible.  Therefore $0$ is the only eigenvalue of $X$.

$(5) \Rightarrow (4)$:
Suppose now that every $X \in \text{Hom}_{G_r}(M,N) \oplus \text{Hom}_{G_r}(N,M)$ is nilpotent.  Write the two components of $X$ as
$$X = X_{M,N} + X_{N,M}.$$
Now let $A \in \text{Aut}_{G_r}(M), B \in \text{Aut}_{G_r}(N)$, and write $I_V,I_M,I_N$ for the respective identity maps on $V,M$ and $N$.  We can then factor $A + X + B$ as
$$A + X_{M,N} + X_{N,M} + B = (A + I_N)(I_V + X_{M,N} + A^{-1}X_{N,M}B^{-1})(I_M + B).$$
By assumption, $X_{M,N} + A^{-1}X_{N,M}B^{-1}$ is a nilpotent element.  Therefore, the factorization above gives us that
$$\det(A + X_{M,N} + X_{N,M} + B)=\det(A)\det(B)\ne 0.$$
Thus we have shown that
$$\text{Aut}_{G_r}(M) \times \text{Hom}_{G_r}(M,N) \times \text{Hom}_{G_r}(N,M) \times \text{Aut}_{G_r}(N) \subseteq \text{Aut}_{G_r}(V).$$

Write $\mathcal{A} = \text{Aut}_{G_r}(M) \times \text{Hom}_{G_r}(M,N) \times \text{Hom}_{G_r}(N,M) \times \text{Aut}_{G_r}(N)$.  It is easy to see that $\mathcal{A}$ and $\text{Aut}_{G_r}(V)$  have the same dimension, since they are both open subsets of $\text{End}_{G_r}(V)$.  This also implies that $\mathcal{A}$ is an open subvariety of $\text{Aut}_{G_r}(V)$.  Since $\text{Aut}_{G_r}(V)$ is connected, any open subgroup must be the entire group (otherwise the cosets would give a decomposition into disjoint open sets).  We therefore must show that $\mathcal{A}$ is a subgroup.  We can represent multiplication $\text{End}_{G_r}(V) \subset \text{End}_k(V)$ in terms of block matrices, with a decomposition as above.  We have
$$\left[ \begin{array}{c|c} 
A & X_{N,M}\\
\hline
X_{M,N} & B\\
\end{array}\right]
\left[ \begin{array}{c|c} 
A^{\prime} & X_{N,M}^{\prime}\\
\hline
X_{M,N}^{\prime} & B^{\prime}\\
\end{array}\right] = \left[ \begin{array}{c|c} 
AA^{\prime} + X_{N,M}X_{M,N}^{\prime}  & *\\
\hline
* & X_{M,N}X_{N,M}^{\prime} + BB^{\prime}\\
\end{array}\right].$$
As an element in $ \text{End}_{G_r}(M)$, we can write
$$AA^{\prime} + X_{N,M}X_{M,N}^{\prime} =  AA^{\prime}\left(I_M + (AA^{\prime})^{-1}X_{N,M}X_{M,N}^{\prime}\right).$$
Since $X_{M,N} + (AA^{\prime})^{-1}X_{N,M}$ is assumed to be nilpotent in $\text{End}_{G_r}(V)$, it must be the case that $(AA^{\prime})^{-1}X_{N,M}X_{M,N}^{\prime}$ is nilpotent in $\text{End}_{G_r}(M)$.  Therefore $AA^{\prime} + X_{N,M}X_{M,N}^{\prime}$ is in $\text{Aut}_{G_r}(M)$.  Similarly, $X_{M,N}X_{N,M}^{\prime} + BB^{\prime} \in \text{Aut}_{G_r}(N)$.  This shows that $\mathcal{A}$ is closed under multiplication, and one can apply a related factorization argument to show that $\mathcal{A}$ is closed under taking inverses, so is a subgroup of $\text{Aut}_{G_r}(V)$.

$(4) \Rightarrow (1)$: Since every summand isomorphic to $M$ is of the form $f.M$, and the component of $f$ in $\text{End}_{G_r}(M)$ is in $\text{Aut}_{G_r}(M)$, it follows that $\text{pr}_M(f.M) = M$, so that $f.M$ is a complement to $N$.
\end{proof}

\begin{theorem}\label{maincase}
Let $V \cong M \oplus N$ over $G_r$, and suppose that any of the equivalent conditions in Proposition \ref{differentways} hold.  Then 
$$\mathfrak{X}^V_M \cong \textup{Hom}_{G_r}(M,N), \qquad \mathfrak{X}^V_N \cong \textup{Hom}_{G_r}(N,M).$$
Furthermore, the set of all $G_r$-decompositions of $V$ into modules isomorphic to $M$ and $N$ can be identified with the variety $\mathfrak{X}^V_M \times \mathfrak{X}^V_N$.
\end{theorem}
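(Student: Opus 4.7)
The plan is to leverage the local coordinate chart on $\mathfrak{Grass}_m(V)$ attached to the decomposition $V = M \oplus N$, as described in Section \ref{Grass}. That chart identifies the affine open $\text{Hom}_k(M,N) \subseteq \mathfrak{Grass}_m(V)$ with the set of subspaces projecting isomorphically onto $M$ via $f \leftrightarrow M_f := \{v + f(v) \mid v \in M\}$. A direct computation with $x \in \text{Dist}(G_r)$ acting on $v + f(v)$ shows that $M_f$ is $G_r$-stable if and only if $f$ commutes with $\text{Dist}(G_r)$, so the closed subvariety of $G_r$-submodules inside this chart is precisely the linear subspace $\text{Hom}_{G_r}(M,N)$.

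I would then identify $\mathfrak{X}^V_M$ with this subspace. The containment $\mathfrak{X}^V_M \subseteq \text{Hom}_{G_r}(M,N)$ is immediate from condition (1) of Proposition \ref{differentways}: any summand isomorphic to $M$ is a complement to $N$ and so lies in our chart. For the reverse inclusion, given $f \in \text{Hom}_{G_r}(M,N)$, I would extend it by zero on $N$ to obtain $\tilde f \in \text{End}_{G_r}(V)$ and observe that $I_V + \tilde f$ lies in $\text{Aut}_{G_r}(V)$, either via Proposition \ref{differentways}(4) or directly from the nilpotency of $\tilde f$ (an inverse is given by $I_V - \tilde f$). Since $(I_V + \tilde f).M = M_f$ and $M$ is a $G_r$-summand, so is $M_f$. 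A symmetric argument yields $\mathfrak{X}^V_N \cong \text{Hom}_{G_r}(N,M)$.

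For the decomposition claim, the map sending a $G_r$-decomposition $V = M' \oplus N'$ with $M' \cong M$ and $N' \cong N$ to the pair $(M',N') \in \mathfrak{X}^V_M \times \mathfrak{X}^V_N$ is tautologically well-defined and injective; surjectivity is where Proposition \ref{differentways}(3) is essential, forcing $M' \cap N' = 0$ and hence $V = M' \oplus N'$ by a dimension count. The main obstacle I foresee is not in the bijections themselves but in articulating cleanly that the set-theoretic identification $\mathfrak{X}^V_M \leftrightarrow \text{Hom}_{G_r}(M,N)$ is an isomorphism of varieties; this must be extracted from the Pl\"ucker/local coordinate construction of Section \ref{Grass}, which presents the variety structure on $\mathfrak{X}^V_M$ as inherited from an open embedding into affine space. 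Once that is done, everything else reduces to routine calculation or to a direct invocation of the equivalent conditions established in Proposition \ref{differentways}.
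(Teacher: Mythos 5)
Your argument is correct, but it follows a genuinely different route from the paper. You work inside the affine chart of $\mathfrak{Grass}_m(V)$ determined by the fixed decomposition $V = M \oplus N$, check that $M_f$ is $\text{Dist}(G_r)$-stable exactly when $f \in \text{Hom}_{G_r}(M,N)$, use condition (1) of Proposition \ref{differentways} to place all of $\mathfrak{X}^V_M$ in the chart, and use $I_V + \tilde f \in \text{Aut}_{G_r}(V)$ (with $\tilde f^2 = 0$) for the reverse inclusion; condition (3) then gives the bijection with pairs of summands for the decomposition statement. The paper instead never leaves the homogeneous-space picture: by Theorem \ref{structure} (and the transitivity of $\mathcal{C}$ established in Proposition \ref{summandmeansstable}), $\mathfrak{X}^V_M \cong \mathcal{C}/\mathcal{C}_M$, and the quotient is computed by a block-matrix factorization using the product decomposition of $\mathcal{C}$ from Proposition \ref{differentways}(4), with the decomposition variety obtained as $\mathcal{C}/(\text{Aut}_{G_r}(M)\times\text{Aut}_{G_r}(N))$. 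Your chart computation is more elementary and makes the affine-space (indeed linear-subspace) structure completely transparent; your closing worry is resolvable exactly as you suggest, since under the hypotheses $\mathfrak{X}^V_M$ is the intersection of the closed set $\mathfrak{Grass}_m(V)^{G_r}$ with the open chart $\text{Hom}_k(M,N)$, and this locally closed (reduced) subvariety structure is the one the paper puts on the orbit via Theorem \ref{structure}. What the paper's coset computation buys, and what your route does not immediately provide, is the explicit identification of $\text{Hom}_{G_r}(M,N) \times \text{Hom}_{G_r}(N,M)$ with a set of coset representatives inside $\mathcal{S}$; that description is precisely what is exploited later in the proof of Theorem \ref{fixedislinear} to see that, once a subgroup $H$ stabilizes a decomposition, its action on $\mathfrak{X}^V_M \times \mathfrak{X}^V_N$ is by conjugation and hence linear. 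In your chart coordinates the $G$-action is by nonlinear morphisms in general (only elements preserving the fixed decomposition act linearly), so if you intend to continue to the linearizability results you would need to recover that coset description or argue equivariance separately.
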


\begin{proof}
From the proposition, the multiplicative structure in $\mathcal{C}$ yields a variety isomorphism
$$\mathcal{C} \cong \text{Aut}_{G_r}(M) \times \text{Hom}_{G_r}(M,N) \times \text{Hom}_{G_r}(N,M) \times \text{Aut}_{G_r}(N),$$
while the subgroup that stabilizes the summand $M$ is the subgroup (described via the variety decomposition)
$$\text{Aut}_{G_r}(M) \times \text{Hom}_{G_r}(N,M) \times \text{Aut}_{G_r}(N).$$
As in the previous proof, represent an element in $\mathcal{C}$ as a block matrix of the form
$$\left[ \begin{array}{c|c} 
A & X_{N,M}\\
\hline
X_{M,N} & B\\
\end{array}\right].$$
This element factors as
$$\left[ \begin{array}{c|c} 
I_M & 0\\
\hline
X_{M,N}A^{-1} & I_N\\
\end{array}\right]
\left[ \begin{array}{c|c} 
A & X_{N,M}\\
\hline
0 & -X_{M,N}A^{-1}X_{N,M} + B\\
\end{array}\right],$$
with $-X_{M,N}A^{-1}X_{N,M} + B \in \text{Aut}_{G_r}(N)$ by the same arguments applied in the previous proof.  Thus
$$\mathcal{C}/\mathcal{C}_M \cong \text{Hom}_{G_r}(M,N).$$
Swapping the roles of $M$ and $N$ proves the same for $\mathfrak{X}^V_N$.

Finally, when looking at the set of decompositions, we note that $\mathcal{C}$ acts transitively on this set, and that the stabilizer of the fixed decomposition $V \cong M \oplus N$ is the subgroup $\text{Aut}_{G_r}(M) \times \text{Aut}_{G_r}(N)$.  Thus we have that
$$\mathfrak{X}^V_M \times \mathfrak{X}^V_N \cong \mathcal{C}/(\text{Aut}_{G_r}(M) \times \text{Aut}_{G_r}(N)) \cong \text{Hom}_{G_r}(M,N) \times \text{Hom}_{G_r}(N,M).$$
\end{proof}

\section{Applications to Representation Theory}

Our aim in introducing the varieties found in Section \ref{varietysection} is to develop another tool to use in studying the representation theories of $G$, $G_r$, and various other subgroup schemes of $G$ (such as $G_rT$ and $G_rB$).  In this section we give various representation-theoretic results, many of which rely in an essential way on the work found in the previous section.

\subsection{}

In this subsection, we will work with the setup in Theorem \ref{maincase}.  That is, let $V \cong M \oplus N$ over $G_r$, and suppose that the conditions in Proposition \ref{differentways} hold.

\begin{prop}\label{Bisenough}
If there is a $G_r$-summand of $V$ isomorphic to $M$ that is stable under the action of $B$, then it is stable under $G$.
\end{prop}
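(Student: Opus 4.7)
The plan is to reformulate the statement as a fixed-point problem on the variety $\mathfrak{X}^V_M$ and then combine Theorem \ref{maincase} with the completeness of flag varieties. A $G_r$-summand $M' \subseteq V$ isomorphic to $M$ is $B$-stable as a subspace precisely when the corresponding point $p \in \mathfrak{X}^V_M$ is fixed by $B$ under the induced $G$-action. So the proposition amounts to showing that any $B$-fixed point of $\mathfrak{X}^V_M$ is automatically $G$-fixed.

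The first step is to invoke Theorem \ref{maincase}, which under our standing hypothesis gives an isomorphism of varieties
\[
\mathfrak{X}^V_M \;\cong\; \mathrm{Hom}_{G_r}(M,N).
\]
The right-hand side is a finite dimensional $k$-vector space, so $\mathfrak{X}^V_M$ is an \emph{affine space} as a variety (we do not need to track the $G$-action through this identification, only the fact that $\mathfrak{X}^V_M$ is affine).

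The second step uses the standard flag-variety dichotomy. Let $p \in \mathfrak{X}^V_M$ be a $B$-fixed point. Then the stabilizer $G_p$ is a closed subgroup of $G$ containing $B$, hence a parabolic subgroup. Consequently $G/G_p$ is a projective (in particular, complete and connected) variety, and the orbit map factors as
\[
G \twoheadrightarrow G/G_p \longrightarrow \mathfrak{X}^V_M,
\]
with the second arrow a bijective morphism onto $G.p$. Since $G/G_p$ is complete, its image $G.p$ is closed in $\mathfrak{X}^V_M$ and itself complete and connected.

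The final step is the observation that any complete, connected subvariety of an affine space is a single point (its coordinate functions restrict to regular functions on a complete connected variety, hence are constant). Therefore $G.p = \{p\}$, meaning the summand is $G$-stable. The only step that is not automatic is the first one, namely having the affine space identification available, but that is precisely what Theorem \ref{maincase} supplies under the equivalent conditions of Proposition \ref{differentways}; the rest of the argument is essentially a rigid-versus-complete dimension count.
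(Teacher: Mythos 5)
Your proposal is correct and follows essentially the same route as the paper: both reduce to the fact that $\mathfrak{X}^V_M$ is an affine space (via Theorem \ref{maincase}) and then use that a complete variety ($G/B$ in the paper, your $G/G_p$ with $G_p$ parabolic) maps to an affine variety only by a constant morphism, so the $B$-fixed point is $G$-fixed. The detour through the parabolic stabilizer is a harmless repackaging of the paper's one-line argument.
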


\begin{proof}
Since the variety $\mathfrak{X}^V_M$ is an affine space, a fixed point for $B$ gives a morphism from the projective variety $G/B$ to an affine variety.  But this must be a constant morphism, so this point is a fixed point for all of $G$.
\end{proof}

\begin{remark}
We note that an arbitrary $G$-module (not satisfying the hypotheses in this section) can have a $G_rB$-submodule that is a $G_r$-summand and yet is not a $G$-submodule.  An easy example is to take the $G$-module $L(\lambda) \otimes L(\mu)^{(r)}$.  Let $v\ne0$ be a lowest-weight vector of $L(\mu)^{(r)}$.  Then $L(\lambda) \otimes v$ spans a $G_rB$-submodule and $G_r$-summand of $L(\lambda) \otimes L(\mu)^{(r)}$.  This is, of course, not a $G$-submodule since $L(\lambda) \otimes L(\mu)^{(r)}$ is a simple $G$-module.
\end{remark}

We recall again that the action of $G$ on $\mathbb{A}^n$ is said to be linearizable if $\mathbb{A}^n$ is $G$-equivariantly isomorphic to a rational $G$-module.

\begin{theorem}\label{fixedislinear}
Let $H \le G$.  The following are equivalent:
\begin{enumerate}

\item There is a $G_rH$-decomposition $V \cong M \oplus N$. 
\item $H$ acts with a fixed point on $\mathfrak{X}^V_M \times \mathfrak{X}^V_N$.
\item The action of $H$ on $\mathfrak{X}^V_M \times \mathfrak{X}^V_N$ is linearizable.

\end{enumerate}
\end{theorem}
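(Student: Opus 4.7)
The plan is to prove the cycle $(1) \Leftrightarrow (2)$ directly from the definitions, then show $(3) \Rightarrow (2)$ trivially, and finally establish $(2) \Rightarrow (3)$ by exhibiting an explicit linearization based at the $H$-fixed decomposition. The geometric substance lives entirely in the last implication.

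First, for $(1) \Leftrightarrow (2)$ I would unpack definitions. By Theorem \ref{maincase}, a point of $\mathfrak{X}^V_M \times \mathfrak{X}^V_N$ is exactly a $G_r$-decomposition $V = M' \oplus N'$ with $M' \cong M$ and $N' \cong N$ over $G_r$. Since $H \le G$ normalizes $G_r$, $H$ acts on the set of such decompositions by sending $(M', N')$ to $(h.M', h.N')$. A fixed point is then precisely a $G_r$-decomposition in which both summands are $H$-stable as subspaces, and since they are already $G_r$-stable, this is the same as a $G_rH$-decomposition.

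The implication $(3) \Rightarrow (2)$ is immediate: if the $H$-action on $\mathfrak{X}^V_M \times \mathfrak{X}^V_N$ is linearizable, then this affine space is $H$-equivariantly isomorphic to a rational $H$-module, and the zero vector is then a fixed point.

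The main content is $(2) \Rightarrow (3)$. Given an $H$-fixed point, apply the equivalence with $(1)$ to obtain a $G_rH$-decomposition $V = M_0 \oplus N_0$, where $M_0 \cong M$ and $N_0 \cong N$ as $G_r$-modules. Since $M_0$ and $N_0$ are $H$-stable subspaces of the rational $G$-module $V$, they are rational $H$-modules, and so are $\textup{Hom}_{G_r}(M_0,N_0)$ and $\textup{Hom}_{G_r}(N_0,M_0)$ under the conjugation action $(h.f)(v) = h.f(h^{-1}.v)$. Using the decomposition $M_0 \oplus N_0$ as the basepoint in the isomorphism of Theorem \ref{maincase}, one writes an arbitrary decomposition $(M',N')$ as $(M_f, N_g)$ with
\[
M_f = \{m + f(m) \mid m \in M_0\}, \qquad N_g = \{n + g(n) \mid n \in N_0\},
\]
for unique $(f,g) \in \textup{Hom}_{G_r}(M_0,N_0) \times \textup{Hom}_{G_r}(N_0,M_0)$. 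The key computation is that $h.M_f = M_{h.f}$ and $h.N_g = N_{h.g}$ where $h.f$ is the conjugation action; this follows by substituting $m = h^{-1}.m'$ inside the defining set for $h.M_f$, which is permissible exactly because $M_0$ is $H$-stable. Consequently, the bijection of Theorem \ref{maincase} intertwines the $H$-action on $\mathfrak{X}^V_M \times \mathfrak{X}^V_N$ with the linear $H$-action on the rational $H$-module $\textup{Hom}_{G_r}(M_0,N_0) \times \textup{Hom}_{G_r}(N_0,M_0)$, which is precisely a linearization.

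The main obstacle is recognizing that the identification in Theorem \ref{maincase} was constructed using a \emph{choice} of basepoint decomposition; the statement itself is not manifestly $H$-equivariant. The point is to re-run the construction of that theorem using the $H$-fixed decomposition rather than the original one, so that the $H$-action on the affine-space model becomes visibly the natural linear conjugation action. Once one commits to that basepoint, the verification of $H$-equivariance reduces to the short substitution above, and linearizability drops out.
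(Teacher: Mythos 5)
Your proposal is correct and follows essentially the same route as the paper: the equivalence of (1) and (2) is definitional, $(3)\Rightarrow(2)$ is the origin of a linear action, and the real content is to rebase the identification of Theorem \ref{maincase} at the $H$-fixed decomposition so that the $H$-action on the affine-space model becomes the conjugation (hence linear) action on $\textup{Hom}_{G_r}(M_0,N_0)\times\textup{Hom}_{G_r}(N_0,M_0)$. The only difference is cosmetic: the paper phrases this via the homogeneous space $\mathcal{S}/\mathcal{S}_x$ and coset representatives normalized by $H$, while you verify the same equivariance directly in graph coordinates via $h.M_f = M_{h.f}$.
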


\begin{proof}
It is clear by the definition of $\mathfrak{X}^V_M \times \mathfrak{X}^V_N$ that $(1)$ and $(2)$ are equivalent.  Also, $(3) \Rightarrow (2)$.  So we will show that $(1) \Rightarrow (3)$.  Suppose there is such a decomposition.  Returning to the proof of Theorem \ref{maincase}, we identified $\mathfrak{X}^V_M$ with the affine space $\text{Hom}_{G_r}(M,N)$.  The decomposition of $V$ over $H$ is equivalent to saying that the image of $H$ in $GL(V)$ is contained in the subgroup $GL(M) \times GL(N)$.  Thus, conjugation by the image of $H$ on $\mathcal{C}$ preserves the variety decomposition
$$\mathcal{C} \cong \text{Aut}_{G_r}(M) \times \text{Hom}_{G_r}(M,N) \times \text{Hom}_{G_r}(N,M) \times \text{Aut}_{G_r}(N).$$
Now, this decomposition includes in a variety decomposition of $\mathcal{S}$ since $\mathcal{S} \cong G \times \mathcal{C}$ as a variety.  From this decomposition we get a closed embedding of varieties
$$\iota: \text{Hom}_{G_r}(M,N) \times \text{Hom}_{G_r}(N,M) \rightarrow \mathcal{S}$$
defined in the obvious way.

Let $x \in \mathfrak{X}^V_M \times \mathfrak{X}^V_N$ be the element corresponding to the fixed decomposition in (1).  Then $H$ is in the stabilizer subgroup $\mathcal{S}_x$.  Thus, the action of $h \in H$ on the coset $g\mathcal{S}_x$ is
$$h.g\mathcal{S}_x = hgh^{-1}h\mathcal{S}_x=hgh^{-1}\mathcal{S}_x.$$
Since $H$ normalizes the subset $\iota(\text{Hom}_{G_r}(M,N) \times \text{Hom}_{G_r}(N,M)) \subseteq \mathcal{S}$, which also serves as a complete set of coset representatives for $\mathcal{S}_x$, it follows that the action of $H$ on
$$\mathfrak{X}^V_M \times \mathfrak{X}^V_N \cong \text{Hom}_{G_r}(M,N) \times \text{Hom}_{G_r}(N,M)$$
is via conjugation on the variety on the right, which just corresponds to the standard linear action on the $G_r$-fixed points of $\text{Hom}_k(M,N) \oplus \text{Hom}_k(N,M)$.
\end{proof}

\begin{prop}\label{Udecomposition}
If there is a direct sum decomposition $V \cong M \oplus N$ over $G_rU$, then some such decomposition extends to a decomposition over $G$.
\end{prop}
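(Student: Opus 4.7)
The plan is to produce a $B$-fixed point on the decomposition variety $X := \mathfrak{X}^V_M \times \mathfrak{X}^V_N$, and then to run the projective-versus-affine argument from Proposition \ref{Bisenough} to promote it to a $G$-fixed point, which corresponds exactly to the desired $G$-decomposition. Recall from Theorem \ref{maincase} that $X \cong \text{Hom}_{G_r}(M,N) \times \text{Hom}_{G_r}(N,M)$ is an affine space, and that a point of $X$ is the same datum as a $G_r$-decomposition of $V$ into summands of the two given isomorphism types.

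First I would invoke Theorem \ref{fixedislinear} with $H = U$: the given $G_rU$-decomposition furnishes a $U$-fixed point on $X$, and equivalently, linearizes the $U$-action. Fix a $U$-equivariant isomorphism $\psi: X \xrightarrow{\sim} W$ where $W$ is some rational $U$-module. The linearity of the $U$-action on $W$ makes $W^U$ a linear subspace, so $X^U \cong W^U$ is itself an affine space, and in particular nonempty. Since $T$ normalizes $U$, the $T$-action on $X$ stabilizes the closed subvariety $X^U$. This $T$-action on $X^U$ need not be linear in the coordinates provided by $\psi$ (our linearization was only as a $U$-variety), but it is algebraic on an affine space, so Bialynicki-Birula's theorem \cite{BB} on torus actions on affine spaces supplies a $T$-fixed point $y \in X^U$. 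Then $y$ is fixed by both $U$ and $T$, hence by $B = TU$.

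Finally, as in the proof of Proposition \ref{Bisenough}, the morphism $G/B \to X$, $gB \mapsto g.y$, is well-defined (because $y$ is $B$-fixed) and is a morphism from the projective variety $G/B$ to the affine variety $X$; it must therefore be constant. Hence $y$ is fixed by $G$, and the $G_r$-decomposition $V \cong M' \oplus N'$ corresponding to $y$, with $M' \cong M$ and $N' \cong N$ as $G_r$-modules, is a $G$-module decomposition. The main subtlety in this plan is Step 2, namely that $X^U$ really is an affine space and not merely a closed subvariety of $X$; this is what enables the use of Bialynicki-Birula, and it is the sole place where the $G_rU$-hypothesis (as opposed to merely a $G_rT$- or $G_r$-hypothesis) is used, via the linearizability clause in Theorem \ref{fixedislinear}.
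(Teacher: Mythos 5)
Your proposal is correct and follows essentially the same route as the paper's proof: use Theorem \ref{fixedislinear} to see that $(\mathfrak{X}^V_M \times \mathfrak{X}^V_N)^U$ is an affine space, note it is $T$-stable since $T$ normalizes $U$, apply \cite{BB} to get a $T$-fixed (hence $B$-fixed) point, and conclude with the $G/B$-to-affine constancy argument of Proposition \ref{Bisenough}. Your explicit remark that the $T$-action on the $U$-fixed locus need not be linear but that \cite{BB} still applies is a nice clarification of a point the paper leaves implicit.
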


\begin{proof}
We have that $\mathfrak{X}^V_M \times \mathfrak{X}^V_N$ is an affine space.  If there is a direct sum decomposition for $G_rU$, then the fixed point space $(\mathfrak{X}^V_M \times \mathfrak{X}^V_N)^U$ is also an affine space by Theorem \ref{fixedislinear}, and is stable under the action of $T$ since $T$-normalizes $U$.  By \cite{BB}, there must be a $T$-fixed point on $(\mathfrak{X}^V_M \times \mathfrak{X}^V_N)^U$, hence a $B$-fixed point.  But any $B$-fixed point is a $G$-fixed point.
\end{proof}

\begin{prop}
A closed $G$-orbit on $\mathfrak{X}^V_M \times \mathfrak{X}^V_N$ corresponds to a reductive subgroup $H \le G$, and a $G_r$-decomposition $V \cong M \oplus N$ that extends to a decomposition over $G_rH$, but does not extend to one over $G_rH^{\prime}$ for any $H < H^{\prime} \le G$.
\end{prop}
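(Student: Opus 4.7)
The plan is to fix a point $x$ in the closed $G$-orbit $\OO \subseteq \mathfrak{X}^V_M \times \mathfrak{X}^V_N$ and take $H := G_x$, the stabilizer of $x$ in $G$. I would then verify that this $H$, together with the $G_r$-decomposition represented by $x$, has all three claimed properties. Different choices of $x \in \OO$ yield $G$-conjugate stabilizers, so the associated subgroup is well-defined up to $G$-conjugacy, which is the expected ambiguity for a subgroup attached to an orbit.

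The extension and maximality assertions are both immediate from Theorem \ref{fixedislinear}. Since $H$ fixes $x$ by construction, the implication $(2) \Rightarrow (1)$ of that theorem shows that the decomposition corresponding to $x$ is $G_rH$-stable. Conversely, if $H \le H^{\prime} \le G$ is closed and the decomposition remains $G_rH^{\prime}$-stable, then $(1) \Rightarrow (2)$ of Theorem \ref{fixedislinear} forces $H^{\prime}$ to fix $x$, so $H^{\prime} \subseteq G_x = H$, and hence $H^{\prime} = H$.

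The substantive point is the reductivity of $H$. By Theorem \ref{maincase}, $\mathfrak{X}^V_M \times \mathfrak{X}^V_N$ is isomorphic to the affine space $\Hom_{G_r}(M,N) \times \Hom_{G_r}(N,M)$, so the closed orbit $G.x$ is a closed subvariety of an affine variety, hence is itself affine. I would then appeal to the version of Matsushima's theorem in positive characteristic due to Richardson (with related proofs by Haboush and by Cline--Parshall--Scott): a closed subgroup $H$ of a reductive group $G$ is reductive if and only if the homogeneous space $G/H$ is affine. The orbit map supplies a bijective morphism $G/G_x \to G.x$, and affineness of $G.x$ propagates to affineness of $G/G_x$ (passing to the identity component if necessary), which yields the reductivity of $H$.

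The main nuisance, rather than a true obstacle, is the potential non-separability of the orbit map $G \to G.x$ in positive characteristic, which could complicate a direct identification of $G/G_x$ with $G.x$. If this arises, I would pass to $G_x^{\circ}$, which has finite index in $G_x$; affineness transfers along the finite cover $G/G_x^{\circ} \to G/G_x$, and reductivity of $G_x^{\circ}$ is equivalent to reductivity of $G_x = H$.
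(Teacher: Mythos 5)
Your proposal is correct and follows essentially the same route as the paper: the paper's proof is just the observation that, since $\mathfrak{X}^V_M \times \mathfrak{X}^V_N$ is affine, a closed $G$-orbit has reductive stabilizer (the characteristic-$p$ Matsushima criterion of Richardson/Haboush/Cline--Parshall--Scott that you cite), with the extension and maximality statements following immediately from the identification of points with decompositions. Your write-up simply makes explicit the details (stabilizer $H = G_x$, the fixed-point dictionary of Theorem \ref{fixedislinear}, and the separability caveat) that the paper leaves to the reader.
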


\begin{proof}
Since $\mathfrak{X}^V_M \times \mathfrak{X}^V_N$ is affine, a closed orbit corresponds to a stabilizer subgroup that is reductive.  The result then follows immediately.
\end{proof}

\subsection{}

We want to improve on the results in the previous subsection by comparing the actions of $G$ on $\mathfrak{X}^V_M$ and $\mathfrak{X}^V_N$.  In particular, it would be useful to know when a $G$-fixed point in one of these varieties implies the existence of a fixed point in the other.  In such cases, one would need only find a $G$-stable summand isomorphic to $M$ or one isomorphic to $N$ in order to obtain a $G$-decomposition $V \cong M \oplus N$.

We begin by observing that the dimensions of $\mathfrak{X}^V_M$ and $\mathfrak{X}^V_N$ are the same whenever $M$ or $N$ is a projective $G_r$-module.

\begin{prop}
Let $V$ be a $G$-module, and $V \cong M \oplus N$ over $G_r$.  If either $M$ or $N$ is projective over $G_r$, then $\dim(\mathfrak{X}^V_M) = \dim(\mathfrak{X}^V_N)$.
\end{prop}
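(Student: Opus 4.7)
The plan is twofold: first, reduce each of $\dim\mathfrak{X}^V_M$ and $\dim\mathfrak{X}^V_N$ to the dimension of a $G_r$-Hom space by a direct orbit-map computation; second, exploit the self-injective structure of $\textup{Dist}(G_r)$ to see that those two Hom spaces have the same dimension when one of $M, N$ is $G_r$-projective. The hypotheses of Proposition \ref{differentways} are \emph{not} assumed, so I cannot simply invoke Theorem \ref{maincase}; I will need the weaker identification coming from the orbit-space description alone.

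For the first step, I would start from the fact that $\mathcal{C}$ acts transitively on $\mathfrak{X}^V_M$ (Proposition \ref{summandmeansstable}) and that the orbit map $\mathcal{C}\to\mathcal{C}.M$ was shown to be separable inside the proof of Theorem \ref{structure}, so that $\mathfrak{X}^V_M\cong\mathcal{C}/\mathcal{C}_M$. Now $\mathcal{C}=\textup{End}_{G_r}(V)^{\times}$ sits as a dense open subset of $\textup{End}_{G_r}(V)$, and $\mathcal{C}_M$ sits as a dense open subset of $\textup{End}_{G_r}(V)^M:=\{f\in\textup{End}_{G_r}(V):f(M)\subseteq M\}$ (with the identity witnessing nonemptiness), so $\dim\mathfrak{X}^V_M$ is the codimension of $\textup{End}_{G_r}(V)^M$ in $\textup{End}_{G_r}(V)$. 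The block decomposition
$$\textup{End}_{G_r}(V)=\textup{End}_{G_r}(M)\oplus\textup{Hom}_{G_r}(N,M)\oplus\textup{Hom}_{G_r}(M,N)\oplus\textup{End}_{G_r}(N)$$
induced by $V\cong M\oplus N$ identifies this codimension with $\dim\textup{Hom}_{G_r}(M,N)$, and the same argument with $M$ and $N$ swapped gives $\dim\mathfrak{X}^V_N=\dim\textup{Hom}_{G_r}(N,M)$.

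For the second step, assume without loss of generality that $M$ is $G_r$-projective. I would invoke the standard fact that $\textup{Dist}(G_r)$ is self-injective, with $Q_r(\lambda)$ serving as both the projective cover and the injective envelope of $L(\lambda)$ \cite[II.11]{J}, so that $M\cong\bigoplus_\lambda Q_r(\lambda)^{m_\lambda}$ for nonnegative integers $m_\lambda$. Both $\dim\textup{Hom}_{G_r}(Q_r(\lambda),N)$ and $\dim\textup{Hom}_{G_r}(N,Q_r(\lambda))$ then compute the composition multiplicity $[N:L(\lambda)]$, so both Hom dimensions collapse to $\sum_\lambda m_\lambda[N:L(\lambda)]$.

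I do not anticipate any serious obstacle: once the two varieties are shown to have dimensions $\dim\textup{Hom}_{G_r}(M,N)$ and $\dim\textup{Hom}_{G_r}(N,M)$, the proposition is really a restatement of the coincidence of indecomposable projectives and indecomposable injectives for $G_r$. The one bookkeeping point that deserves a sentence is the verification that $\mathcal{C}_M$ is dense in $\textup{End}_{G_r}(V)^M$, which follows because any invertible $f$ preserving $M$ must in fact map $M$ onto $M$, so $f^{-1}$ also preserves $M$ and hence lies in $\mathcal{C}_M$.
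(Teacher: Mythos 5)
Your argument is correct, and it diverges from the paper's proof in instructive ways. The paper's own proof concentrates entirely on showing $\dim\operatorname{Hom}_{G_r}(M,N)=\dim\operatorname{Hom}_{G_r}(N,M)$: it notes that $\operatorname{Hom}_k(M,N)\cong M^*\otimes N$ is projective over $G_r$ when one of $M,N$ is, that for a projective $Q$ the dimension of $Q^{G_r}$ counts the multiplicity of $Q_r(0)$ among its indecomposable summands, and that $Q_r(0)^*\cong Q_r(0)$, so $\dim_k Q^{G_r}=\dim_k (Q^*)^{G_r}$; it then leaves implicit the identification $\dim\mathfrak{X}^V_M=\dim\operatorname{Hom}_{G_r}(M,N)$ (which, as you correctly note, cannot be taken from Theorem \ref{maincase} since the hypotheses of Proposition \ref{differentways} are not assumed). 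You instead prove the Hom-dimension equality by decomposing $M$ into modules $Q_r(\lambda)$ and using that $\dim\operatorname{Hom}_{G_r}(Q_r(\lambda),N)$ and $\dim\operatorname{Hom}_{G_r}(N,Q_r(\lambda))$ both equal $[N:L(\lambda)]$ (here you should say a word about $\operatorname{End}_{G_r}(L(\lambda))=k$, which holds as $k$ is algebraically closed); both routes rest on the same structural fact that $Q_r(\lambda)$ is simultaneously the projective cover and the injective hull of $L(\lambda)$, so this difference is largely one of bookkeeping (multiplicity counting versus duality and $G_r$-fixed points). The more substantive added value in your write-up is the first step: the orbit--stabilizer computation $\dim\mathfrak{X}^V_M=\dim\mathcal{C}-\dim\mathcal{C}_M=\dim\operatorname{End}_{G_r}(V)-\dim\operatorname{End}_{G_r}(V)^M=\dim\operatorname{Hom}_{G_r}(M,N)$, using that $\mathcal{C}$ and $\mathcal{C}_M$ are nonempty open (hence dense) subsets of the linear spaces $\operatorname{End}_{G_r}(V)$ and $\operatorname{End}_{G_r}(V)^M$, which makes precise the step the paper glosses over and does not even require the separability you cite, since the orbit dimension formula suffices.
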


\begin{proof}
Let $Q$ be any finite dimensional projective $G_r$-module.  It decomposes into a direct sum of modules of the form $Q_r(\mu)$, $\mu \in X_r(T)$.  As $Q_r(\mu)$ is both the projective cover and injective hull of $L(\mu)$ over $G_r$, it follows that $Q_r(0)^* \cong Q_r(0)$, and $Q_r(\mu)^* \not\cong Q_r(0)$ if $\mu \neq 0$.  The dimension of $Q^{G_r}$ equals the number of indecomposable summands of $Q$ that are isomorphic to $Q_r(0)$.  This is then the same as the number of summands of $Q^*$ isomorphic to $Q_r(0)$.  Thus we have
$$\text{dim}_k\left(Q^{G_r}\right) = \text{dim}_k\left((Q^*)^{G_r}\right).$$
If either $M$ or $N$ is projective, we have that the $G_r$-module $\text{Hom}_k(M,N) \cong M^* \otimes N$ is projective.  We then have
$$\text{Hom}_k(M,N)^{G_r} \cong (M^* \otimes N)^{G_r}$$
and
$$\text{Hom}_k(N,M)^{G_r} \cong (N^* \otimes M)^{G_r} \cong ((M^* \otimes N)^*)^{G_r}.$$
The dimensions of $(M^* \otimes N)^{G_r}$ and $((M^* \otimes N)^*)^{G_r}$ are the same, establishing the equality $\dim(\mathfrak{X}^V_M) = \dim(\mathfrak{X}^V_N)$.
\end{proof}

We recall the antiautomorphism $\tau$ of $G$ that swaps each positive root subgroup with its negative counterpart, and is the identity map on $T$ (this generalizes to other reductive groups the transpose map for $GL_n$).  One then gets a $\tau$-twist of a finite dimensional $G$-module $V$ by letting $^{\tau}V = V^*$ as a vector space, and by letting $g.\varphi(v) = \varphi(\tau(g).v)$.  Twisting by $\tau$ gives a contravariant functor from the category of finite dimensional $G$-modules to itself (cf. \cite[II.2.12]{J}).

By general properties (for example, in the proof of \cite[Lemma I.4.4]{J}), for finite dimensional $G$-modules $V$ and $W$, there is a canonical isomorphism $(V \otimes W)^* \cong V^* \otimes W^*$.  Since $\tau \times \tau$ defines an antiautomorphism of $G \times G$, and the action of $G$ on $V \otimes W$ and $V^* \otimes W^*$ comes via the action of $G \times G$ on these modules ($G$ injecting to this group via the diagonal map), it follows that $^{\tau}(M \otimes N) \cong {^{\tau}M} \otimes {^{\tau}N}$.

As $\tau$ restricts to an antiautomorphism of $G_r$, all of this also holds for modules over $G_r$ and $G_rT$.

\begin{prop}
Let $V$ be a $G$-module, and $V \cong M \oplus N$ over $G_r$, and suppose that the conditions in Proposition \ref{differentways} hold.
\begin{enumerate}
\item There is a $G$-equivariant isomorphism $\mathfrak{X}^{V}_{M} \cong \mathfrak{X}^{V^*}_{N^*}$.
\item There is a variety isomorphism $\psi: \mathfrak{X}^{V}_{M} \xrightarrow{\sim} \mathfrak{X}^{^{\tau}V}_{^{\tau}N}$, such that $\tau(g^{-1}).\psi(x) = \psi(g.x)$ for all $g \in G$ and $x \in \mathfrak{X}^{V}_{M}$.
\end{enumerate}

\end{prop}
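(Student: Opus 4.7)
The plan is to realize both isomorphisms through the annihilator (perp) construction on the dual space, with part (2) requiring only a small identification to pass from $V^*$ with the standard dual action to $^{\tau}V$ with the twisted action.

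For part (1), I would define $\psi_1 \colon \mathfrak{X}^V_M \to \mathfrak{X}^{V^*}_{N^*}$ by $M' \mapsto (M')^{\perp}$. A preliminary step is to verify that the hypothesis of Proposition \ref{differentways} transfers from $V = M \oplus N$ to $V^* = M^* \oplus N^*$; this is immediate from condition (5), since transposition yields a nilpotency-preserving isomorphism $\text{Hom}_{G_r}(M,N) \oplus \text{Hom}_{G_r}(N,M) \cong \text{Hom}_{G_r}(N^*,M^*) \oplus \text{Hom}_{G_r}(M^*,N^*)$. Given any $G_r$-decomposition $V = M' \oplus N'$ (necessarily with $N' \cong N$), one has $V^* = (N')^{\perp} \oplus (M')^{\perp}$ with $(M')^{\perp} \cong (N')^* \cong N^*$, so $\psi_1$ lands in $\mathfrak{X}^{V^*}_{N^*}$. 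That $\psi_1$ is a variety isomorphism follows because the perp map defines a $GL(V)$-equivariant isomorphism of Grassmannians $\mathfrak{Grass}_m(V) \cong \mathfrak{Grass}_{n-m}(V^*)$, with inverse given by the analogous perp map for $V^*$ composed with the canonical identification $V^{**} \cong V$. $G$-equivariance is the classical identity $(g.W)^{\perp} = g.(W^{\perp})$ for the standard dual action.

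For part (2), the key observation is that $^{\tau}V$ and $V^*$ have identical underlying vector spaces, and since $\tau$ restricts to an antiautomorphism of $G_r$, a subspace $W \subseteq V^*$ is $G_r$-stable under the standard action if and only if it is $G_r$-stable under the twisted action (a subspace is preserved by $G_r$ iff it is preserved by $\tau(G_r) = G_r$). Moreover, the identity linear map $N^* \to {^{\tau}N}$ is a $G_r$-module isomorphism. Together these give an equality $\mathfrak{X}^{V^*}_{N^*} = \mathfrak{X}^{^{\tau}V}_{^{\tau}N}$ as subvarieties of the Grassmannian of $V^*$. I would then take $\psi$ to be $\psi_1$ composed with this identification. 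Directly from the definitions, the twisted action $g \cdot_t \varphi$ on $^{\tau}V$ equals the standard action $\tau(g^{-1}) \cdot_s \varphi$ on $V^*$ (using $\tau(g^{-1}) = \tau(g)^{-1}$ and $\tau^2 = \mathrm{id}$). Applying this with $\tau(g^{-1})$ in place of $g$ and combining with the standard equivariance of $\psi_1$ yields $\tau(g^{-1}).\psi(x) = g \cdot_s \psi(x) = \psi(g.x)$, as required.

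The only real obstacle is notational: tracking which dual action is in use at each point. Once the set-theoretic identification $\mathfrak{X}^{V^*}_{N^*} = \mathfrak{X}^{^{\tau}V}_{^{\tau}N}$ is established and the change-of-action formula $g \cdot_t = \tau(g^{-1}) \cdot_s$ is in hand, the twisted equivariance in (2) is just a rewriting of the standard equivariance from (1).
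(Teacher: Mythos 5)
Your overall route is the same as the paper's: part (1) is the annihilator map $M' \mapsto (M')^{\perp}$ together with the identity $(g.W)^{\perp}=g.(W^{\perp})$ for the standard dual action, and part (2) is deduced by comparing the standard action on $V^*$ with the twisted action on ${^{\tau}V}$ via the change-of-action formula $g\cdot_t=\tau(g^{-1})\cdot_s$; your equivariance computation in (2) agrees with the one in the paper. (The preliminary check that the hypotheses of Proposition \ref{differentways} pass to $V^*$ is harmless but not actually needed for the statement.)

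There is, however, one supporting claim in (2) that is false as stated: the identity linear map $N^* \to {^{\tau}N}$ is \emph{not} in general a $G_r$-module isomorphism. If it were, the standard dual action and the $\tau$-twisted action would coincide on $N^*$, i.e.\ $\tau(u)$ would have to act on $N$ as the antipode does for every $u\in\operatorname{Dist}(G_r)$ (at the level of points, $\tau(g)=g^{-1}$), which already fails for the restriction to $G_r$ of a typical $G$-module. What you actually need, and what is true, is that a subspace $W\le V^*$ which is a $G_r$-summand isomorphic to $N^*$ for the standard action is a $G_r$-summand isomorphic to ${^{\tau}N}$ for the twisted action, and conversely. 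The correct justification is that the twisted action is the standard action precomposed with the automorphism $\sigma:=\tau\circ(\,\cdot\,)^{-1}$ of $G$ (equivalently of $\operatorname{Dist}(G_r)$), so on any stable subspace the two module structures differ by the twist functor ${}^{\sigma}(-)$, which preserves submodules, complements and isomorphism and sends $N^*$ to ${}^{\sigma}(N^*)={^{\tau}N}$; equivalently, and closer to the paper, the annihilator of a summand $M'$ inside ${^{\tau}V}$ is canonically ${^{\tau}(V/M')}\cong{^{\tau}N'}\cong{^{\tau}N}$, since $\tau$-twisting is a functor and $N'\cong N$ by Krull--Schmidt. With this repair your identification $\mathfrak{X}^{V^*}_{N^*}=\mathfrak{X}^{^{\tau}V}_{^{\tau}N}$ as subsets of the Grassmannian of $V^*$ holds and the rest of your argument goes through. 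A minor wording point: the statement ``a subspace is preserved by $G_r$ iff it is preserved by $\tau(G_r)=G_r$'' should be argued via $\operatorname{Dist}(G_r)$ (or the group scheme), since $G_r$ has only the trivial $k$-point; the assertion itself is correct.
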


\begin{proof}
(1) The $G_r$-decomposition $V \cong M \oplus N$ gives a $G_r$-decomposition $V^* \cong M^* \oplus N^*$, where
$$N^* = \{f \in V^* \mid f(M) = 0\}.$$
For any $g \in G$, $g.M$ is also a $G_r$-complement to $N$ (possibly the same if $g$ does not move $M$).  This yields a summand $(N^*)^{\prime}$ of $V^*$, defined by
$$(N^*)^{\prime} = \{f \in V^* \mid f(g.M) = 0\}.$$
On the other hand, the action by $g$ on $V^*$ sends $N^*$ to the summand
$$g.N^* = \{g.f \in V^* \mid f(M) = 0\} = \{f \in V^* \mid f(g.M) = 0\}.$$
In this way we get a $G$-equivariant isomorphism from $\mathfrak{X}^V_M$ to $\mathfrak{X}^{V^*}_{N^*}$ by sending a $G_r$-summand isomorphic to $M$ to the linear functional which is $0$ on this subspace.

(2) Since the vector space ${^{\tau}V}=V^*$, many of the steps above hold verbatim here also.  We can match up $G_r$-summands of $V$ isomorphic to $M$ with $G_r$-summands of ${^{\tau}V}$ isomorphic to ${^{\tau}N}$ using the same map (because of the vector space isomorphism).  The key difference is that the action of $G$ on ${^{\tau}V}$ involves $\tau(g)$ rather than $g^{-1}$ (which is the usual action of $G$ on the linear dual of a $G$-module).  Therefore
$$\tau(g^{-1}).{^{\tau}N} = \{\tau(g^{-1}).f \in V^* \mid f(M) = 0\} = \{f \in V^* \mid f(g.M) = 0\}.$$

\end{proof}

\begin{theorem}\label{Utausplit}
Let $V,M$ and $N$ be as in the previous proposition.  Suppose further that ${^{\tau}V}\cong V$ over $G$, and ${^{\tau}M} \cong M$ over $G_r$.  If either $\mathfrak{X}^V_M$ or $\mathfrak{X}^V_N$ has a fixed point for $U$, then both do, and there is a $G$-decomposition $V \cong M \oplus N$.
\end{theorem}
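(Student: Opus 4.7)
The plan is to use the $\tau$-twist isomorphism from the previous proposition to transport a $U$-fixed point in one of the $G_r$-summand varieties to a point in the other that is fixed by the \emph{opposite} unipotent radical, then translate by a representative of the longest Weyl element to recover a $U$-fixed point there, and finally combine the two $U$-fixed points via \cref{Udecomposition} to produce a $G$-decomposition.

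First I would derive ${^{\tau}N} \cong N$ as $G_r$-modules. Indeed, $\tau$-twisting the $G_r$-decomposition $V \cong M \oplus N$ yields ${^{\tau}V} \cong {^{\tau}M} \oplus {^{\tau}N}$, and combining the given ${^{\tau}V}\cong V$ and ${^{\tau}M}\cong M$ with Krull--Schmidt forces ${^{\tau}N}\cong N$. Fixing a $G$-isomorphism ${^{\tau}V}\cong V$ then promotes the map of the previous proposition to a variety isomorphism
$$\psi: \mathfrak{X}^V_M \xrightarrow{\sim} \mathfrak{X}^V_N, \qquad \tau(g^{-1}).\psi(x) = \psi(g.x).$$
Assume now that $x_0 \in \mathfrak{X}^V_M$ is $U$-fixed. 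Since $B$ is the negative Borel in the paper's conventions, $U$ is the negative unipotent radical and $\tau(U)=U^+$ is the positive one, so the equivariance formula immediately makes $\psi(x_0) \in \mathfrak{X}^V_N$ fixed by $U^+$. Fixing any lift $\dot{w}_0 \in N_G(T)$ of the longest Weyl element, we have $\dot{w}_0 U^+ \dot{w}_0^{-1} = U$, so $y_0 := \dot{w}_0.\psi(x_0)$ is a $U$-fixed point of $\mathfrak{X}^V_N$. The symmetric argument, with the roles of $M$ and $N$ exchanged (permitted now that ${^{\tau}N}\cong N$), handles the reverse direction, giving the ``both do'' half of the conclusion.

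With $U$-fixed $x_0 \in \mathfrak{X}^V_M$ and $y_0 \in \mathfrak{X}^V_N$ in hand, \cref{differentways}(3) forces $x_0 \cap y_0 = 0$, so $V = x_0 \oplus y_0$ is a $G_rU$-stable decomposition; \cref{Udecomposition} then immediately extends it to a decomposition over $G$. The step I expect to warrant the most care is verifying that the chosen $G$-identification ${^{\tau}V} \cong V$ genuinely converts $\tau(U)$-fixedness in $\mathfrak{X}^{{^{\tau}V}}_{{^{\tau}N}}$ into $U^+$-fixedness in $\mathfrak{X}^V_N$ under the original $G$-action, i.e., tracking that the $\tau$-twist in the formula $\tau(g^{-1}).\psi(x) = \psi(g.x)$ is preserved after the identification; once this bookkeeping is settled the remaining steps are formal.
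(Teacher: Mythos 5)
Your proposal is correct and follows essentially the same route as the paper: deduce ${^{\tau}N}\cong N$ over $G_r$, use the $\tau$-twist equivariance of the previous proposition to turn a $U$-fixed point of $\mathfrak{X}^V_M$ into a $U^+$-fixed point of $\mathfrak{X}^V_N$, translate by $w_0$ to get a $U$-fixed point there, and then invoke \cref{Udecomposition} to upgrade the resulting $G_rU$-decomposition to a $G$-decomposition. Your explicit use of \cref{differentways}(3) to combine the two $U$-stable summands into a $G_rU$-decomposition just spells out a step the paper leaves implicit.
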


\begin{proof}
The assumptions on $M$ and $V$ imply that ${^{\tau}N} \cong N$ over $G_r$.  If there is a fixed point for $U$ in $\mathfrak{X}^V_M$, then by the previous proposition there is a fixed point for $\tau(U)=U^+$ acting on $\mathfrak{X}^V_N=\mathfrak{X}^{^{\tau}V}_{^{\tau}N}$.  Let $x$ be such a fixed point.  Then $w_0.x$ is a fixed point for $U=w_0(U^+){w_0}^{-1}$.  Thus $U$ fixes some summand isomorphic to $N$, giving a $G_rU$-decomposition of $V$, and hence we get some $G$-decomposition by Proposition \ref{Udecomposition}.
\end{proof}

\begin{remark}
If $V \cong V^*$ and $M \cong M^*$, then the same result holds with an even shorter proof.
\end{remark}

\subsection{}

Let $\lambda \in X_r(T)$, and let $D_r(\lambda)$ denote the $G$-module $L((p^r-1)\rho + w_0\lambda) \otimes St_r$.  There is a $G_r$-module decomposition
$$D_r(\lambda) \cong Q_r(\lambda) \oplus C,$$
where $C$ is a projective $G_r$-module, and does not have any $G_r$-summand isomorphic to $Q_r(\lambda)$ \cite[II.11.9(3)]{J}.  In fact, there is a $G_rT$-decomposition
\begin{equation}\label{Donkindecomp}D_r(\lambda) \cong \widehat{Q}_r(\lambda) \oplus \sum_{\mu \in X_r(T), \mu \ne \lambda} \left(\widehat{Q}_r(\mu) \otimes \text{Hom}_{G_r}(L(\mu),D_r(\lambda))\right).\end{equation}
This can be seen by decomposing the $G_r$-socle of $D_r(\lambda)$ as a $G$-module, which then describes the $G_rT$-socle (it is the same as the $G_r$-socle, but also distinguishes the $T$-weights), which in turn determines the $G_rT$-summands.  Donkin's tilting module conjecture says that the indecomposable tilting module of highest weight $(p^r-1)\rho + w_o\lambda$ is isomorphic to $Q_r(\lambda)$ over $G_r$.  This is equivalent to some decomposition as in (\ref{Donkindecomp}) being a $G$-module decomposition \cite[E.9]{J}.

There is a $G$-module isomorphism ${^{\tau}D_r(\lambda)} \cong D_r(\lambda)$, and $G_r$-isomorphisms ${^{\tau}Q_r(\lambda)} \cong Q_r(\lambda)$, ${^{\tau}C} \cong C$.  This is then an example of a $G$-module with $G_r$-summands that satisfy the conditions in Theorem \ref{Utausplit}.  Moreover, both $\text{soc}_{G_r} \, Q_r(\lambda)$ and $\text{soc}_{G_r} \, C$ are canonically $G$-submodules of $D_r(\lambda)$, and dually, $Q_r(\lambda) \oplus \text{rad}_{G_r} \, C$ and $\text{rad}_{G_r} \, Q_r(\lambda) \oplus C$ are as well.  Therefore, we have the following $\tau$-invariant $G$-modules:
$$Q_r(\lambda) \oplus \left(\text{rad}_{G_r} \, C/\text{soc}_{G_r} \, C\right), \quad \left(\text{rad}_{G_r} \, Q_r(\lambda)/\text{soc}_{G_r} \, Q_r(\lambda) \right) \oplus C.$$
Let $V_1$ and $V_2$ denote these $G$-modules, with $M_1,N_1$ and $M_2,N_2$ designating these $G_r$-summands.

\begin{theorem}
Consider the modules $D_r(\lambda)$, $V_1$, and $V_2$ above, with the given $G_r$-decompositions.  If any of these modules has such a $G_r$-decomposition in which one of the $G_r$-summands is stable under the action of $U$, then there is a $G$-decomposition of the form
$$D_r(\lambda) \cong Q_r(\lambda) \oplus C.$$
In particular, the tilting module $T(2(p^r-1)\rho +w_0\lambda)$ is isomorphic to $Q_r(\lambda)$ over $G_r$.
\end{theorem}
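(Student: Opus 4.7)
The plan is to reduce all three cases to a single application of Theorem \ref{Utausplit} to $D_r(\lambda)$ itself. To this end I will establish $G$-equivariant isomorphisms of varieties $\mathfrak{X}^{V_1}_{M_1} \cong \mathfrak{X}^{D_r(\lambda)}_{Q_r(\lambda)}$ and $\mathfrak{X}^{V_2}_{N_2} \cong \mathfrak{X}^{D_r(\lambda)}_{C}$. A $U$-stable summand in the $D_r(\lambda)$-decomposition is then handled directly; a $U$-stable summand in the $V_1$- or $V_2$-decomposition will, after possibly applying Theorem \ref{Utausplit} to $V_1$ or $V_2$ in order to pass between the two summand isomorphism types, transport across the appropriate isomorphism to a $U$-fixed point of $\mathfrak{X}^{D_r(\lambda)}_{Q_r(\lambda)}$ or $\mathfrak{X}^{D_r(\lambda)}_{C}$. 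A final application of Theorem \ref{Utausplit} to $D_r(\lambda)$ then yields the $G$-decomposition $D_r(\lambda) \cong Q_r(\lambda) \oplus C$.

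Next I verify the hypotheses of Theorem \ref{Utausplit} in each of the three setups. The $\tau$-invariance of each $V_i$ is recorded in the text preceding the theorem, and the $\tau$-invariance of the individual $G_r$-summands follows from Krull--Schmidt using that the two parts of each decomposition share no common indecomposable $G_r$-summand. For the conditions of Proposition \ref{differentways}, I check condition (5). The partner module to $Q_r(\lambda)$ (or to $\textup{rad}_{G_r} Q_r(\lambda)/\textup{soc}_{G_r} Q_r(\lambda)$) has no direct summand isomorphic to $Q_r(\lambda)$: this is explicit for $C$, and for the radical/socle quotients it uses that these modules have no projective indecomposable $G_r$-summand. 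Since $Q_r(\lambda)$ has local endomorphism ring, any composition $M \to N \to M$ of $G_r$-maps restricts to a non-unit endomorphism on each indecomposable summand of $M$ and is therefore nilpotent; hence every element of $\textup{Hom}_{G_r}(M,N) \oplus \textup{Hom}_{G_r}(N,M)$ squares into a nilpotent endomorphism on each side, and is itself nilpotent on $V$.

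To establish the Hom-space isomorphism for $V_1$, I invoke Theorem \ref{maincase} to identify, as $G$-varieties, $\mathfrak{X}^{D_r(\lambda)}_{Q_r(\lambda)} \cong \Hom_{G_r}(Q_r(\lambda), C)$ and $\mathfrak{X}^{V_1}_{M_1} \cong \Hom_{G_r}(Q_r(\lambda), \textup{rad}_{G_r} C/\textup{soc}_{G_r} C)$. Since $L(\lambda)$ appears as a composition factor of neither $C/\textup{rad}_{G_r} C$ nor $\textup{soc}_{G_r} C$, the groups $\Hom_{G_r}(Q_r(\lambda), C/\textup{rad}_{G_r} C)$ and $\Hom_{G_r}(Q_r(\lambda), \textup{soc}_{G_r} C)$ vanish, and combined with the projectivity of $Q_r(\lambda)$ over $G_r$ (killing the relevant $\textup{Ext}^1$'s), the long exact sequences associated to the short exact sequences $0 \to \textup{rad}_{G_r} C \to C \to C/\textup{rad}_{G_r} C \to 0$ and $0 \to \textup{soc}_{G_r} C \to \textup{rad}_{G_r} C \to \textup{rad}_{G_r} C/\textup{soc}_{G_r} C \to 0$ yield $G$-equivariant isomorphisms $\Hom_{G_r}(Q_r(\lambda), C) \cong \Hom_{G_r}(Q_r(\lambda), \textup{rad}_{G_r} C) \cong \Hom_{G_r}(Q_r(\lambda), \textup{rad}_{G_r} C/\textup{soc}_{G_r} C)$. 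A symmetric argument, now using the projectivity of $C$ in place of $Q_r(\lambda)$, gives $\mathfrak{X}^{V_2}_{N_2} \cong \mathfrak{X}^{D_r(\lambda)}_{C}$. Finally, the tilting module identification is Pillen's result as recalled in the introduction: the existence of a $G$-decomposition $D_r(\lambda) \cong Q_r(\lambda) \oplus C$ is equivalent to Donkin's tilting module conjecture, namely $T(2(p^r-1)\rho + w_0\lambda) \cong Q_r(\lambda)$ as $G_r$-modules.

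The main obstacle I expect is verifying that the Hom-space correspondences in paragraph three are genuine $G$-equivariant isomorphisms of varieties (and not merely bijections on points). Theorem \ref{maincase} identifies the varieties $\mathfrak{X}^V_M$ with affine Hom-spaces in a $G$-equivariant way, and the long exact sequences are natural in the arguments, so $G$-equivariance of the composite chain should follow by unwinding the explicit local-coordinate description in Theorem \ref{maincase} and Section \ref{Grass}. A secondary subtlety is that the hypothesized $U$-stable summand in the $V_1$ or $V_2$ decomposition may be of either isomorphism type, so one must first apply Theorem \ref{Utausplit} to $V_1$ or $V_2$ to pass to a $U$-fixed point of the variety corresponding to the appropriate type before transporting across the Hom-space isomorphism.
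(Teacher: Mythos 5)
Your proposal is correct and takes essentially the same route as the paper: both arguments are sandwiched between two applications of Theorem \ref{Utausplit} (first to $V_1$ or $V_2$, then to $D_r(\lambda)$), with the transport back to $D_r(\lambda)$ resting on the same facts, namely $\Hom_{G_r}(Q_r(\lambda),\text{soc}_{G_r}\,C)=0$ (resp.\ the symmetric vanishing) together with projectivity/injectivity of the relevant summand. The only real difference is packaging: the paper moves a single $G$-stable summand of $V_1$ back into $D_r(\lambda)$, where the complement to $\text{soc}_{G_r}\,C$ inside the resulting $G$-submodule is unique and hence automatically $G$-stable, which neatly sidesteps the $G$-equivariance issue you flag for your Hom-space identifications (those Hom spaces carry no intrinsic linear $G$-action, so the ``naturality of the long exact sequence'' argument would need to be replaced by the canonical geometric description of the map, e.g.\ taking the image of a summand in the canonical $G$-subquotient $V_1$).
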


\begin{proof}
The case for $D_r(\lambda)$ is a direct corollary to Theorem \ref{Utausplit}.  We will deal with the case of $V_1$, after which the $V_2$ case follows by a similar argument.

Suppose that some $G_r$-summand of $V_1$ isomorphic either to $M_1$ or $N_1$ is stable under the action of $U$.  Applying Theorem \ref{Utausplit}, there is a $G$-splitting of $V_1$ of this form.  This implies that there is a $G_r$-decomposition of $D_r(\lambda)$ into summands $Q_r(\lambda)$ and $C$ such that $Q_r(\lambda) \oplus \text{soc}_{G_r} \, C$ is a $G$-submodule of $D_r(\lambda)$.  Since
$$\text{Hom}_{G_r}(Q_r(\lambda), \text{soc}_{G_r} \, C)=0,$$
the $G_r$-summand of this submodule that is isomorphic to $Q_r(\lambda)$ is a $G$-submodule.  Thus, there is a $G$-submodule of $D_r(\lambda)$ that is isomorphic over $G_r$ to $Q_r(\lambda)$.  We are then done by Theorem \ref{Utausplit}.
\end{proof}

It would be nice if one could iterate the method in this theorem by further peeling off $G_r$-socle and $G_r$-radical layers from the summands in $V_1$ or $V_2$, and then continuing in this manner until a ``base case'' is reached at which point there is a splitting that can be brought all the way back to a splitting of the original module.  One would need additional information about the socle and radical filtrations of the projective indecomposable $G_r$-modules for this, and even then there appear to be other difficulties in extending the argument in this way.

\section{Reductive Groups Acting on $\mathbb{A}^n$}

Let $V$, $M$, and $N$ be as in Theorem \ref{maincase}.  As shown in the previous section, whether or not $V$ decomposes over $G$ into modules isomorphic over $G_r$ to $M$ and $N$ reduces to a question about fixed points for $G$ acting on some $\mathbb{A}^n$.  Unfortunately, determining when the latter holds may be an even harder issue to resolve.  In characteristic $0$, there are not any known examples of a reductive group acting on an affine space without a fixed point.  In characteristic $p$, however, Kraft and Popov \cite{KP} produce many such examples for any reductive group that is not a torus.  These examples arise from the fact that there always exist non-split extensions of the trivial module by some other module.

We now give a reformulation of their construction for $G$.  Consider finite dimenional $G$-modules $V$ and $W$ such that $W \le V$ and $V/W \cong k$.  The affine $G$-variety $\mathbb{P}(V)-\mathbb{P}(W)$ is isomorphic to $\mathbb{A}^m$, where $m = \text{dim}_k(W)$.

\begin{lemma}
For any $H \le G$, the variety $\mathbb{P}(V)-\mathbb{P}(W)$ has an $H$-fixed point if and only if the extension
$$0 \rightarrow W \rightarrow V \rightarrow k \rightarrow 0$$
splits over $H$.
\end{lemma}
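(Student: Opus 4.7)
The plan is to unwind both directions by translating between $H$-fixed lines in $V$ and $H$-equivariant sections of $V\twoheadrightarrow V/W$.

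First I would observe that a point of $\mathbb{P}(V)-\mathbb{P}(W)$ is precisely a one-dimensional subspace $L\le V$ with $L\not\subseteq W$, and that such a point is $H$-fixed exactly when $L$ is stable as a subspace under $H$. A one-dimensional $H$-stable subspace $L$ carries an $H$-action through a character $\chi\colon H\to\mathbb{G}_m$. The composition
\[
L\hookrightarrow V\twoheadrightarrow V/W\cong k
\]
is $H$-equivariant; it is nonzero because $L\not\subseteq W$, and $V/W$ is the trivial $H$-module. A nonzero $H$-equivariant map from the character $\chi$ to the trivial character forces $\chi$ to be trivial, so $H$ acts trivially on $L$ and the inclusion $L\hookrightarrow V$ is an $H$-equivariant section of $V\twoheadrightarrow V/W$. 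This gives a splitting of the extension over $H$.

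Conversely, given an $H$-module splitting, one obtains an $H$-submodule $L\le V$ with $L\cong k$ and $V=W\oplus L$. In particular $L\not\subseteq W$ and $L$ is a one-dimensional $H$-stable subspace, so it represents an $H$-fixed point of $\mathbb{P}(V)-\mathbb{P}(W)$.

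There is no real obstacle here beyond making sure the character argument is clean: the only subtlety is noting that the requirement $V/W\cong k$ (as a trivial $H$-module) is what forces the character on any fixed line to be trivial, so that an $H$-stable line off of $W$ automatically yields a splitting rather than merely a one-dimensional stable complement with a possibly nontrivial character.
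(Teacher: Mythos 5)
Your proof is correct and follows essentially the same route as the paper: an $H$-fixed point of $\mathbb{P}(V)-\mathbb{P}(W)$ is an $H$-stable line not contained in $W$, which maps isomorphically onto $V/W\cong k$ and hence is a trivial $H$-complement to $W$, and conversely. Your explicit character argument is just a fuller justification of the paper's remark that such a line is ``necessarily isomorphic to $k$''; there is no substantive difference.
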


\begin{proof}
A fixed point occurs on this space if and only if $\mathbb{P}(W)^H \subsetneq \mathbb{P}(V)^H$, which happens if and only if there is a one-dimensional $H$-submodule $V_0 \le V$ that is not contained in $W$.  This happens if and only if $V_0$ is an $H$-complement to $W$ in $V$, necessarily isomorphic to $k$ since $V/W \cong k$ over all subgroups of $G$.
\end{proof}

\begin{remark}
This was essentially the idea in \cite{KP}.  If $G$ is not a torus, then there is always a non-split extension of $k$ by some finite dimensional $G$-module, thus proving the existence of many fixed point free actions of $G$ on an affine space.
\end{remark}

In Theorem \ref{fixedislinear}, we showed that $\mathfrak{X}^V_M \times \mathfrak{X}^V_N$ has a fixed point for $H \le G$ if and only if it is linearizable for $H$.  The examples just given also have this property.

\begin{prop}
Let $H \le G$.  Suppose $H$ has a fixed point on $\mathbb{P}(V)-\mathbb{P}(W)$.  Then this action is linearizable for $H$.
\end{prop}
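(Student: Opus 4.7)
The plan is to apply the previous lemma to upgrade the fixed point to an $H$-module splitting of $0 \to W \to V \to k \to 0$, and then to write down an explicit $H$-equivariant identification of $\mathbb{P}(V) - \mathbb{P}(W)$ with a rational $H$-module.

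First I would invoke the previous lemma to obtain an $H$-module decomposition $V \cong W \oplus V_0$, where $V_0$ is a one-dimensional trivial $H$-module (since $V_0 \cong V/W \cong k$ as $H$-modules). Fix any nonzero $v_0 \in V_0$; triviality forces $h.v_0 = v_0$ for every $h \in H$, and this is the only place the fixed point assumption really gets used.

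Second, I would write down the standard affine chart
$$\Phi : W \longrightarrow \mathbb{P}(V) - \mathbb{P}(W), \qquad w \mapsto [w + v_0],$$
and check it is an isomorphism of varieties. Bijectivity is immediate, because every line in $V$ not contained in $W$ meets the affine hyperplane $v_0 + W$ in exactly one point. That $\Phi$ and its inverse are morphisms is the usual identification of $\mathbb{P}(V) - \mathbb{P}(W)$ with $\mathbb{A}^{\dim W}$ via the affine coordinates dual to any basis of $W$, so no calculation beyond this standard fact is needed.

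Third and last, $H$-equivariance is the short computation
$$\Phi(h.w) = [h.w + v_0] = [h.w + h.v_0] = h.[w + v_0] = h.\Phi(w),$$
which only uses that $v_0$ is $H$-fixed and that $W$ is $H$-stable. Since $W$ is already a rational $H$-module, $\Phi$ realizes the given action of $H$ on $\mathbb{P}(V) - \mathbb{P}(W)$ as the linear action on $W$, i.e., the action is linearizable. There is no real obstacle here; the entire substantive content sits in the previous lemma, which promotes a fixed point to an $H$-stable line complementary to $W$.
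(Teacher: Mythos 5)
Your proof is correct and follows essentially the same route as the paper: use the preceding lemma to promote the fixed point to an $H$-splitting $V = V_0 \oplus W$ with $V_0$ trivial, then identify $\mathbb{P}(V)-\mathbb{P}(W)$ with the $H$-module $W$ via the standard affine chart through $v_0$, checking equivariance from $h.v_0 = v_0$. The only cosmetic difference is that you write the chart $w \mapsto [w+v_0]$ while the paper writes its inverse in coordinates; the identification is the same.
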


\begin{proof}
In this case we have the $H$-splitting $V = V_0 \oplus W$.  Thus we can choose a basis $\{v_0,\ldots, v_m\}$ for $V$ with $v_0$ spanning $V_0$ and $v_1,\ldots,v_m$ spanning $W$.  We can identify $\mathbb{P}(V)-\mathbb{P}(W)$ with the $G$-module $W$, according to the map
$$[a_0v_0, a_1v_1,\ldots, a_mv_m] \mapsto \frac{a_1}{a_0}v_1 + \cdots + \frac{a_m}{a_0}v_m.$$
The triviality of $H$ on $v_0$ evidently makes this an $H$-equivariant isomorphism.
\end{proof}

\begin{quest}
Suppose $\mathbb{A}^n$ has the structure of a $G$-variety in such a way that for every closed subgroup $H \le G$ there is an $H$-fixed point on $\mathbb{A}^n$ if and only if $\mathbb{A}^n$ is linearizable for $H$.  Does this $G$-structure arise as $\mathbb{P}(V)-\mathbb{P}(W)$ for some $W \le W$ with $V/W \cong k$?
\end{quest}

\section{Main Questions}

In addition to those raised earlier, there are two primary questions that we plan to address in future work.  The first is a general question dealing with the geometry of the $G$-varieties introduced in this paper.

\begin{quest}
Can the $G$-orbits in $\mathfrak{X}^V_M$ and $\mathfrak{Grass}_m(V)^{G_r}$ be described in some reasonable way?  What can be determined about the orbit closures?
\end{quest}

The hope, of course, is that there is some geometric information that can be extracted with limited knowledge of the $G$-submodule structure of $V$, and that this information will force the existence (or non-existence) of a $G$-fixed point in $\mathfrak{X}^V_M$.

The second question deals with the primary application we had in mind in initiating this work.

\begin{quest}
Can Donkin's tilting module conjecture be verified in more characteristics/ranks than those cases that are presently known? 
\end{quest}

We plan to address this question in a forthcoming paper, though so far the methods in that paper have been more inspired by, than dependent on, the results in this article.

\end{document}